\def\NAT@def@citea{\def\@citea{\NAT@separator}}% Suppress spaces between citations using natbib.sty
\theoremstyle{plain}% Theorem-like structures provided by amsthm.sty
\newtheorem{theorem}{Theorem}[section]
\newtheorem{lemma}[theorem]{Lemma}
\newtheorem{proposition}[theorem]{Proposition}
\theoremstyle{definition}
\newtheorem{definition}[theorem]{Definition}
\newtheorem{example}[theorem]{Example}
\theoremstyle{remark}
\newtheorem{remark}[theorem]{Remark}
\newcommand{\al}{\alpha}
\newcommand{\be}{\beta}
\newcommand{\ga}{\gamma}
\newcommand{\de}{\delta}
\newcommand{\bx}{\bar x}
\newcommand {\R} {\mathbb R}
\newcommand {\N} {\mathbb N}
\newcommand{\vertiii}[1]{\left\vert\kern-0.25ex\left\vert\kern-0.25ex\left\vert #1\right\vert\kern-0.25ex\right\vert\kern-0.25ex\right\vert}
\newcommand{\vertiiiBig}[1]{\Big\vert\kern-0.25ex\Big\vert\kern-0.25ex\Big\vert #1\Big\vert\kern-0.25ex\Big\vert\kern-0.25ex\Big\vert}
\newcommand{\abs}[1]{\left\vert#1\right\vert}
\newcounter{mycount}
\begin{document}
	
	%-------------------------------------------------------------------------
	% editorial commands: to be inserted by the editorial office
	%
	%\firstpage{1} \volume{228} \Copyrightyear{2004} \DOI{003-0001}
	%
	%
	%\seriesextra{Just an add-on}
	%\seriesextraline{This is the Concrete Title of this Book\br H.E. R and S.T.C. W, Eds.}
	%
	% for journals:
	%
	%\firstpage{1}
	%\issuenumber{1}
	%\Volumeandyear{1 (2004)}
	%\Copyrightyear{2004}
	%\DOI{003-xxxx-y}
	%\Signet
	%\commby{inhouse}
	%\submitted{March 14, 2003}
	%\received{March 16, 2000}
	%\revised{June 1, 2000}
	%\accepted{July 22, 2000}
	%
	%
	%
	%---------------------------------------------------------------------------
	%Insert here the title, affiliations and abstract:
	%
	
\title{Metric Constructions and Fixed Point Theorems in Product Spaces}

%----------Author 1
\author{Doan Huu Hieu}

\address{%
Department of Mathematics\\
College of Natural Sciences, Can Tho University\\
Can Tho City,
Vietnam}

\email{dhhieu3107@gmail.com}

%\thanks{This work was completed with the support of our	\TeX-pert.}
%----------Author 2
\author{Vo Minh Tam}
\address{Department of Mathematics\br
Dong Thap University\br
 Dong Thap Province,Vietnam}
\email{vmtam@dthu.edu.vn}

\author{Nguyen Duy Cuong}

\address{%
Department of Mathematics\\
College of Natural Sciences, Can Tho University\\
Can Tho City,
Vietnam}

\email{ndcuong@ctu.edu.vn}

%----------classification, keywords, date
\subjclass{Primary 47Hxx, 54Exx; Secondary 47H09, 47H10, 54E35}

\keywords{fixed point, approximate fixed point sequence,	metric space, length space, geodesic space}

\date{January 20, 2026}
%----------additions
%\dedicatory{To my boss}
%%% ----------------------------------------------------------------------

\begin{abstract}
The paper studies a general scheme for constructing metrics on a product of metric spaces by means of a family of continuous convex functions. 
This construction includes the conventional $p$-metrics and generates metrics  that are topologically equivalent to the conventional ones. 
As an application, we study fixed point and approximate fixed point properties for nonexpansive maps on a product space equipped with the constructed metric.
We show that existing fixed point results of this type are consequences of our framework.
Examples are provided to illustrate the established results.
The construction machinery is also used to study  products of length  and geodesic spaces.  
The obtained results encompass existing ones and provide a background for potential studies of fixed point properties on these product spaces.
\end{abstract}

\maketitle

%\setcounter{tocdepth}{2}
%\tableofcontents
\section{Introduction}\label{S1}

Metric fixed point theory is a cornerstone of nonlinear analysis with diverse applications in optimization, game theory, equilibrium problems, variational inequalities, differential equations, and many other areas of pure and applied mathematics. 
The theory has its roots in Poincaré’s work on dynamical systems \cite{Pon86} and is subsequently developed through the fundamental contribution of Banach \cite{Ban22} along with the contributions of Brouwer \cite{Bro12} and Schauder \cite{Sch30}.
The metric space structures play a crucial role in the  theory since notions such as distance, completeness, convergence and others are fundamental for the formulation and analysis of fixed point results \cite{AnsSah23,GolAgaKum21,KhaiKir01}.

A metric space is said to have the \textit{fixed point property} for property $P$  if every self-map possessing property $P$ admits a fixed point.
Given a self-map on a product of two metric spaces, a fundamental problem is to determine which properties of the map together with fixed point properties of the component spaces guarantee the existence of a fixed point.
This problem has been intensively studied in the literature; see, for instance,
\cite{Nad68,For82,KirSte84,TanXu91,EspKir01,Kuc90} and the references therein.
In some situations, however, the existence of an exact fixed point cannot be guaranteed.
This leads to the study of approximate fixed point notions
\cite{KohLeu07,BraMorScaTij03}.
One such notion is an \textit{approximate fixed point sequence} requiring the existence of a sequence whose fixed point error converges to zero \cite{EspKir01}.
A metric space is said to have the \textit{approximate fixed point property} for a property $P$ if every self-map possessing property $P$ admits an approximate fixed point sequence.

In \cite{EspKir01}, among other things, the authors show that a nonexpansive self-map on a product of two metric spaces admits a fixed point when the component spaces have suitable fixed point properties.
Moreover, if one space has the approximate fixed point property for nonexpansive maps and the other is a Banach space, the authors prove that every nonexpansive self-map admits an approximate fixed point sequence.
It is worth mentioning that these results are obtained under the assumption that the product space is equipped with the maximum metric.
It is natural to ask whether analogues of the aforementioned results continue to hold when the product space is endowed with a general metric.
In this paper,  we address this question by establishing fixed point and approximate fixed point results for product spaces equipped with a broad class of metrics. 
In particular, we show that  existing results of this type arise as special cases of our framework.
Examples are provided to illustrate the established results.

Given a finite family of metric spaces, a conventional approach to constructing a metric on the product space is to equip the product space with a $p$-metric for some $p \in [1,\infty]$. 
In this setting, the sum and maximum metrics correspond to $p=1$ and $p=\infty$, respectively. 
More generally, one can construct a product metric by employing an arbitrary norm on $\mathbb{R}^n$ satisfying monotonicity conditions with respect to the coordinatewise order \cite{LooSte90,ChaPet91,BauStoWit61}.
Such monotonicity assumptions are imposed to ensure that the resulting function satisfies the triangle inequality. 
A limitation of this approach, however, is that the class of norms fulfilling these monotonicity conditions is not explicitly known.

In this paper, we propose an alternative approach to constructing metrics on product spaces.
Our construction scheme is based on a class of continuous convex functions  on the standard simplex of $\mathbb{R}^n$.
Given a finite family of sets together with functions defined on them, we construct a new function on the product set in terms of the component functions via a continuous convex function.
It is shown that the resulting function is a metric on the product space if and only if each component function is a metric on the corresponding individual set.
In particular, the $p$-metrics are obtained by plugging suitable convex functions into the constructed formula.
We show that any metric obtained from this construction is topologically equivalent to the conventional ones.

A metric space is called a length space if the distance between any two points equals the greatest lower bound of the lengths of rectifiable curves joining them, and a geodesic space if this greatest lower bound is always attained.
While a general metric space may lack a pathwise interpretation of distance, length and geodesic spaces admit a well-defined notion of motion between points.
Hence, they provide a convenient setting for the study of shortest paths, convexity, curvature and other properties \cite{Pap14}.
To the best of our knowledge, however, existing product constructions for length and geodesic spaces are available only  for $p$-metrics with $p \in [1,\infty]$; see, for instance, \cite{BriHae99}.

The study of fixed point results in length and geodesic spaces has been an active area of research; see
\cite{KohLeu07,Kir04.2,AriLiLop14,KirSha17,KirSha14,AriLeuLop14,ReiZas24} and the references therein.
In this paper, we apply a metric construction scheme for product spaces to the setting of length and geodesic spaces.
More precisely, given a finite family of metric spaces,
we show that the product space equipped with the constructed metric is a length space (a geodesic space)  if and only if each component  is a length space (a geodesic space).
The construction, among other things, provides a  framework for potential studies of fixed point results on a product of  length and geodesic spaces equipped with the metrics.

We would like to note that, among other applications, length and geodesic spaces provide a convenient setting for the study of error bounds, metric regularity and transversality properties in variational analysis \cite{Iof17}.
In complete metric spaces, slope-based sufficient conditions for these properties are typically obtained via the Ekeland variational principle \cite{AzeCorLuc02}, whereas the corresponding necessary conditions usually rely on certain convexity assumptions imposed on the associated functions, sets and set-valued maps \cite{CuoKru22,CuoKru21.3}.
However, in the setting of length spaces, these convexity assumptions can be omitted \cite{Iof01+,AzeCor04}.
Thus, the product construction for length and geodesic spaces developed in this paper not only provides a framework for potential studies of fixed point results in products of such spaces but also contributes to a deeper understanding of the analysis of the aforementioned properties.

The next Section~\ref{S2} contains basic definitions and preliminary results that are used throughout the paper.
Section~\ref{S3} presents a general scheme for constructing metrics on products of metric spaces.
The proposed construction recovers the conventional $p$-metrics and generates metrics that are topologically equivalent to the conventional ones.
In Sections~\ref{S4} and~\ref{S5}, we study the existence of fixed points and approximate fixed point sequences, respectively, for nonexpansive maps on a product of metric spaces equipped with the constructed distance. 
We demonstrate that existing results of this type can be derived as consequences of our general framework.
Examples are given to illustrate the established results.
Sections~\ref{S6} and~\ref{S7} study product constructions for length and geodesic spaces.
In particular, we provide a formula for computing the length of a curve in a product space in terms of the lengths of its component curves.
The obtained results encompass existing ones and form a basis for potential investigations of fixed point results on products of length and geodesic spaces equipped with the general metrics.

\section{Preliminaries}\label{S2}
The following statements recall some basic concepts needed for our study; see, for instance, \cite{EspKir01,AnsSah23}.
\begin{definition}\label{D5.1}
Let  $f: X \to Y$ be a map between metric spaces. 
The map $f$ is
\begin{enumerate}
\item \label{D5.1-1} 
Lipschitz continuous with constant $\theta>0$ if
\begin{gather}\label{lipschitz}
d(f(x),f(x')) \le \theta\cdot d(x,x')\;\;\text{for all}\;\; x,x' \in X;
\end{gather}
\item\label{D5.1-2}
{nonexpansive} if \eqref{lipschitz} holds with $\theta=1$;
\item\label{D5.1-4}
{strictly contractive} if $d(f(x),f(x'))<d(x,x')$ for all $x,x'\in X$ with $x\ne x'$.
\end{enumerate}
\end{definition}

\begin{definition}\label{D5.2}
Let $X$ be a metric space, and  $f: X \to X$. 
\begin{enumerate}
\item\label{D5.2-1}
A point $\bx\in X$ is called a {fixed point} of $f$ if $\bx=f(\bx)$. 
The  set of all fixed points of $f$ is denoted by $\text{Fix}f$.
\item\label{D5.2-2}
A sequence $\{x_n\}\subset X$ is called an {approximate fixed point sequence} of $f$ if $d(x_n, f(x_n))\to 0$ as $n\to\infty$.
\end{enumerate}
\end{definition}

\begin{remark}\label{R5.3}
\begin{enumerate}
\item\label{R5.3-1}
It follows from Definition \ref{D5.1} that 
\begin{center}
strict contractivity $\subset$ nonexpansiveness $\subset$ Lipschitz continuity.
\end{center}
If a strictly contractive map has a fixed point, then this point is unique.
\item\label{R5.3-2}
By Definition~\ref{D5.2}, the existence of a fixed point immediately implies the existence of an approximate fixed point sequence.
The converse implication does not hold in general.
Indeed, let $X:=(0,1)$ be endowed with the metric induced by the norm $|\cdot|$ on $\R$.
The function $f(x) := \tfrac{x}{2}$ admits no fixed point in $X$.
Nevertheless, the sequence
$x_n:= \tfrac{1}{n}$ $(n \in\N)$ is an approximate fixed point sequence of $f$.
\end{enumerate}
\end{remark}

The following definitions recall basic notions from metric geometry needed for the subsequent study; see \cite{BriHae99,Pap14}.
\begin{definition}
Let $(X,d)$ be a metric space, and $a,b\in\R$ with $a\le b$. 
A continuous map $\sigma:[a,b]\to X$ is called a {curve} (or a {path}) in $X$.
If $\sigma(a)=x$ and  $\sigma(b)=y$, then $\sigma$ is said to join $x$ and $y$.
\end{definition}	

\begin{definition}\label{D4.1-1}
Let $(X,d)$ be a metric space, and $a,b\in\R$ with $a\le b$. 
A curve $\sigma:[a,b]\to X$ is
\begin{enumerate}
\item\label{D4.1-1.1}
a {geodesic} if
\begin{gather*}
d(\sigma(t),\sigma(t'))=|t-t'|\;\;\text{for all}\;\;t,t'\in[a,b];
\end{gather*}	
\item\label{D4.1-1.2}
a {constant speed geodesic} with number $\lambda\ge 0$ if
\begin{gather*}
d(\sigma(t),\sigma(t'))=\lambda\cdot|t-t'|\;\;\text{for all}\;\;t,t'\in[a,b].
\end{gather*}	 
\end{enumerate}
\end{definition}	

The following statement gives a way to construct a geodesic via a constant speed geodesic.
\begin{proposition}\label{P4.2}
Let $(X,d)$ be a metric space, and $a,b\in\R$ with $a\le b$. 
If a curve $\sigma:[a,b]\to X$ is a \textit{constant speed geodesic} with number $\lambda>0$, then the curve
\begin{gather}\label{R4.5-4} \gamma(t):=\sigma\left(a+\frac{t}{\lambda}\right)\;\;\text{for all }\;\;t\in[0,\lambda(b-a)]
\end{gather}
is a \textit{geodesic}.
\end{proposition}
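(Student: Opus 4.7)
The proof reduces to a straightforward affine reparametrization, so my plan is to simply verify the two requirements of a geodesic in Definition~\ref{D4.1-1}\eqref{D4.1-1.1}: that $\gamma$ is a curve (i.e., a continuous map on its stated interval into $X$), and that it is an isometry with the parameter.

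First I would check that $\gamma$ is well defined on $[0,\lambda(b-a)]$. For any $t\in[0,\lambda(b-a)]$, the point $a+t/\lambda$ ranges over $[a,b]$ since $\lambda>0$, so $\sigma(a+t/\lambda)$ is defined. Continuity of $\gamma$ then follows from the continuity of $\sigma$ composed with the affine map $t\mapsto a+t/\lambda$, so $\gamma$ is indeed a curve in $X$.

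Next I would verify the isometry property. Given $t,t'\in[0,\lambda(b-a)]$, set $s:=a+t/\lambda$ and $s':=a+t'/\lambda$, which both lie in $[a,b]$. Applying the constant speed geodesic identity in Definition~\ref{D4.1-1}\eqref{D4.1-1.2} with the number $\lambda$, I obtain
\begin{gather*}
d\big(\gamma(t),\gamma(t')\big)=d\big(\sigma(s),\sigma(s')\big)=\lambda\cdot|s-s'|=\lambda\cdot\left|\frac{t-t'}{\lambda}\right|=|t-t'|.
\end{gather*}
This is exactly the defining identity of a geodesic, so $\gamma$ satisfies Definition~\ref{D4.1-1}\eqref{D4.1-1.1}.

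There is no real obstacle here; the only thing that could go wrong is a division by zero, which is ruled out by the hypothesis $\lambda>0$ (and this is precisely why the statement excludes the $\lambda=0$ case, where $\sigma$ would be constant and the rescaling would not make sense). The proof is essentially a one-line change of variables.
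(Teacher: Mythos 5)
Your proof is correct and follows essentially the same route as the paper's: check that $a+t/\lambda$ lands in $[a,b]$, then apply the constant speed identity and cancel $\lambda$. The only (harmless) addition is your explicit remark on continuity, which the paper leaves implicit.
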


\begin{proof}
Let $t,t'\in [0,\lambda(b-a)]$.
Then $a+\frac{t}{\lambda},a+\frac{t'}{\lambda}\in[a,b]$.
By Definition~\ref{D4.1-1}\eqref{D4.1-1.2},
\begin{align*}
d(\gamma(t),\gamma(t'))
=d\left(\sigma\left(a+\frac{t}{\lambda}\right),\sigma\left(a+\frac{t'}{\lambda}\right)\right)
=\lambda\cdot \left|\left(a+\frac{t}{\lambda}\right)- \left(a+\frac{t'}{\lambda}\right)  \right|=|t-t'|.
\end{align*}		
By Definition~\ref{D4.1-1}\eqref{D4.1-1.1}, $\gamma$ is a geodesic.
\end{proof}	

\section{Product Metric Constructions}\label{S3}
In this section, we present a general scheme for constructing metrics on a product of metric spaces.
Let $\Omega_n:=\{(t_1,\ldots,t_{n})\in\R^{n}_+\mid \sum_{i=1}^nt_i =1\}$, where $\R_+$ denotes the set of nonnegative real numbers and $n\ge 2$.
Observe that $\max\{t_1,\ldots,t_n\}\ge \frac{1}{n}$ for all $(t_1,\ldots,t_{n})\in\Omega_n$.
The set $\Omega_n$ is a convex and compact subset of $\R^n$.
Denote by $\pmb{\Psi}_n$ the class of all continuous convex functions $\psi:\Omega_n\to\R$ satisfying
$\psi(\mathbf{e}_1)=\cdots=\psi(\mathbf{e}_n)=1$, where $\mathbf{e}_1,\ldots,\mathbf{e}_n$ are the standard basis vectors of $\R^n$, and
\begin{gather*}
\psi(t_1,\ldots,t_n)\ge (1-t_i)\cdot\psi\left(\dfrac{t_1}{1-t_i},\ldots,\dfrac{t_{i-1}}{1-t_i},0,\dfrac{t_{i+1}}{1-t_i},\ldots,\dfrac{t_{n}}{1-t_i}\right)
\end{gather*}	
for all $(t_1,\ldots,t_n)\in\Omega_n$ with $t_i<1$ 
$(i=1,\ldots,n)$.
This class of functions has been studied in \cite{SaiKatTak00,Cuo25} for the construction of norms on a product of normed spaces.

\begin{remark}\label{R2.1}
\begin{enumerate}
\item
In this paper, we write $\infty$ instead of $+\infty$.
For a scalar $p\in[1,\infty]$, define
\begin{gather}\label{ppsi}
\psi_{p}(t_1,\ldots,t_n)
:=\begin{cases}
\left(t_1^p+\cdots+t_n^p\right)^{\frac{1}{p}}  & \text{if } p\in[1,\infty),\\
\max\{t_1,\ldots,t_n\} & \text{if } p=\infty
\end{cases} 
\end{gather}
for all $(t_1,\ldots,t_n)\in\Omega_n$.
One can verify that $\psi_p\in \pmb{\Psi}_n$; see \cite{Cuo26} for a detailed proof.
%Moreover, $\psi_1(t_1,\ldots,t_n)=1$ for all $(t_1,\ldots,t_n)\in\Omega_n$.
\item\label{R2.1-2}
Let $\psi\in \pmb{\Psi}_n$.
Then 
\begin{gather}\label{R2.1-4}
\max\{t_1,\ldots,t_n\}\le\psi(t_1,\ldots,t_n)\le 1
\end{gather}	
for all $(t_1,\ldots,t_n)\in\Omega_n$; see \cite{Cuo25}.
\end{enumerate}
\end{remark}

The following statement establishes a monotonicity-type property of the generating function; see \cite{Cuo25}.
This property plays a key role in proving the triangle inequality for the constructed function on the product set.
\begin{lemma}\label{L2.2}
Let $\psi\in\pmb{\Psi}_n$, $0<\al_i\le\be_i$ $(i=1,\ldots,n)$, $\al:=\sum_{i=1}^{n}\al_i$ and 
$\be:=\sum_{i=1}^{n}\be_i$.
Then
\begin{gather}\label{L2.3-1}
\al \cdot\psi\left(\dfrac{\al_1}{\al},\ldots,\dfrac{\al_{n}}{\al}\right)\le \be \cdot\psi\left(\dfrac{\be_1}{\be},\ldots,\dfrac{\be_{n}}{\be}\right).
\end{gather}		
\end{lemma}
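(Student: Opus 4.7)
The plan is to reformulate inequality \eqref{L2.3-1} in terms of the positively homogeneous extension of $\psi$. Define $\phi:\R^n_+\to\R$ by $\phi(0):=0$ and
\[
\phi(x):= x_+\cdot\psi(x/x_+)\qquad\text{for } x\ne 0,
\]
where $x_+:=\sum_{i=1}^n x_i$. Then \eqref{L2.3-1} is exactly the assertion that $\phi(\al_1,\ldots,\al_n)\le\phi(\be_1,\ldots,\be_n)$ whenever $0<\al_i\le\be_i$ for every $i$. It therefore suffices to prove that $\phi$ is non-decreasing in each coordinate on $\R^n_+$, and then pass from $(\al_1,\ldots,\al_n)$ to $(\be_1,\ldots,\be_n)$ by incrementing the components one at a time.

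To establish coordinatewise monotonicity, I would rely on two ingredients. First, $\phi$ is convex on $\R^n_+$. This is the standard perspective-function argument: given $u,v\in\R^n_+\setminus\set{0}$ and $\la\in(0,1)$, set $w:=\la u+(1-\la)v$ and write $w/w_+$ as the convex combination $\mu(u/u_+)+(1-\mu)(v/v_+)$ with $\mu:=\la u_+/w_+$; convexity of $\psi$ on $\Omega_n$ followed by multiplication by $w_+$ yields $\phi(w)\le \la\phi(u)+(1-\la)\phi(v)$, while the boundary cases involving $0$ reduce to the positive homogeneity $\phi(tx)=t\phi(x)$ for $t\ge 0$. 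Second, the defining inequality of $\pmb{\Psi}_n$ translates, after multiplying through by $x_+$, into the homogeneous statement
\[
\phi(x-x_i\mathbf{e}_i)\le \phi(x)\qquad(i=1,\ldots,n),
\]
i.e., zeroing out the $i$-th coordinate decreases $\phi$.

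Combining these, fix $i$ and $x_j\ge 0$ for $j\ne i$, and consider the one-variable function $g(s):=\phi(x_1,\ldots,x_{i-1},s,x_{i+1},\ldots,x_n)$ on $[0,\infty)$. It is convex as the restriction of a convex function to a line, and the second ingredient gives $g(0)\le g(s)$ for every $s\ge 0$. For $0<s_1<s_2$, the identity $s_1=(s_1/s_2)\,s_2+(1-s_1/s_2)\cdot 0$ and convexity produce
\[
g(s_1)\le (s_1/s_2)\,g(s_2)+(1-s_1/s_2)\,g(0)\le (s_1/s_2)\,g(s_2)+(1-s_1/s_2)\,g(s_1),
\]
which rearranges to $g(s_1)\le g(s_2)$. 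Hence $g$ is non-decreasing, so $\phi$ is monotone in each coordinate, and the lemma follows by changing the entries of $(\al_1,\ldots,\al_n)$ into those of $(\be_1,\ldots,\be_n)$ one index at a time.

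The main obstacle I anticipate is purely notational bookkeeping: the defining inequality of $\pmb{\Psi}_n$ is written on the simplex, whereas the argument is natural on the full positive cone, so one must carefully track the passage between $\psi$ and its homogeneous extension $\phi$ and verify convexity of $\phi$ up to the boundary of $\R^n_+$ (including the origin). Once the reformulation $\phi(x-x_i\mathbf{e}_i)\le\phi(x)$ is in place, the one-dimensional convex-function argument closes the proof.
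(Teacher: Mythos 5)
Your proof is correct. Note that the paper does not actually prove Lemma~\ref{L2.2}: it is quoted from the reference [Cuo25] without an in-text argument, so there is no internal proof to compare against. Your route is the natural one and it is complete: the homogeneous extension $\phi(x)=x_+\,\psi(x/x_+)$ turns \eqref{L2.3-1} into coordinatewise monotonicity of $\phi$ on $\R^n_+$; the perspective-function computation gives convexity of $\phi$; the defining inequality of $\pmb{\Psi}_n$, multiplied through by $x_+$, is exactly $\phi(x-x_i\mathbf{e}_i)\le\phi(x)$; and the one-variable step $g(s_1)\le (s_1/s_2)g(s_2)+(1-s_1/s_2)g(s_1)$ rearranges to $g(s_1)\le g(s_2)$ as claimed. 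The boundary bookkeeping you flag is genuinely harmless here, since all $\al_i,\be_i>0$ keeps every intermediate point (and every point of the segment used in the one-dimensional argument) away from the origin, so convexity of $\phi$ on $\R^n_+\setminus\{0\}$ suffices.
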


The following theorem characterizes the metric structure of a product space in terms of its components.
\begin{theorem}\label{T3.10}
Let $X_i$ be a nonempty set, $d_i:X_i\times X_i\to\R$ $(i=1,\ldots,n)$, $M:=X_1\times\cdots\times X_n$, and $\psi\in\pmb{\Psi}_n$.
Define 
\begin{equation}\label{T2.2-1}
d_\psi(x,y)
:= \begin{cases}
\left(\sum_{i=1}^n d_i(x_i,y_i)\right)\cdot\psi\left(\dfrac{d_1(x_1,y_1)}{\sum_{i=1}^nd_i(x_i,y_i)},\ldots,\dfrac{d_n(x_n,y_n)}{\sum_{i=1}^nd_i(x_i,y_i)}\right)  & \text{\rm if } x\ne y,\\
0 & \text{\rm if } x= y
\end{cases} 
\end{equation}
for all $x:=(x_1,\ldots,x_n), y:=(y_1,\ldots,y_n)\in M$.
Then $d_i$ is a metric on $X_i$ $(i=1,\ldots,n)$ if and only if $d_\psi$ is a metric on $M$.
\end{theorem}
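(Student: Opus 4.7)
The plan is to verify the metric axioms of $d_\psi$ under the assumption that each $d_i$ is a metric, and conversely to recover the metric structure of each $d_i$ by embedding it as a coordinate slice of $M$.

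For the implication ``each $d_i$ is a metric $\Rightarrow$ $d_\psi$ is a metric'', the positivity $d_\psi(x,y)>0$ whenever $x\neq y$ follows from the lower bound $\psi(t_1,\ldots,t_n)\ge\max\{t_1,\ldots,t_n\}\ge 1/n$ in Remark~\ref{R2.1}\eqref{R2.1-2}, applied to the ratios $d_i(x_i,y_i)/\sum_j d_j(x_j,y_j)$. Symmetry is immediate from the symmetry of each $d_i$. The essential point is the triangle inequality. Setting $a_i:=d_i(x_i,y_i)$, $b_i:=d_i(y_i,z_i)$, $c_i:=d_i(x_i,z_i)$, $A:=\sum_i a_i$, $B:=\sum_i b_i$, and $C:=\sum_i c_i$, I would first exploit $c_i\le a_i+b_i$ together with Lemma~\ref{L2.2} (extended by a continuity argument to indices where some $c_i$ may vanish) to deduce
\begin{equation*}
C\cdot\psi\!\left(\frac{c_1}{C},\ldots,\frac{c_n}{C}\right)\le (A+B)\cdot\psi\!\left(\frac{a_1+b_1}{A+B},\ldots,\frac{a_n+b_n}{A+B}\right).
\end{equation*}
Then, with $\lambda:=A/(A+B)\in[0,1]$, the identity
\begin{equation*}
\frac{a_i+b_i}{A+B}=\lambda\cdot\frac{a_i}{A}+(1-\lambda)\cdot\frac{b_i}{B}\qquad(i=1,\ldots,n)
\end{equation*}
combined with the convexity of $\psi$ yields
\begin{equation*}
(A+B)\cdot\psi\!\left(\frac{a_1+b_1}{A+B},\ldots,\frac{a_n+b_n}{A+B}\right)\le A\cdot\psi\!\left(\frac{a_1}{A},\ldots,\frac{a_n}{A}\right)+B\cdot\psi\!\left(\frac{b_1}{B},\ldots,\frac{b_n}{B}\right).
\end{equation*}
The right-hand side is exactly $d_\psi(x,y)+d_\psi(y,z)$, so chaining the two estimates delivers the triangle inequality, while the degenerate cases $A=0$ (i.e.\ $x=y$) or $B=0$ are trivial.

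For the converse implication, I would fix an index $j\in\{1,\ldots,n\}$, pick arbitrary reference points $z_k\in X_k$ for $k\ne j$, and consider the coordinate embedding $\iota_j:X_j\to M$ defined by $\iota_j(x):=(z_1,\ldots,z_{j-1},x,z_{j+1},\ldots,z_n)$. The normalization $\psi(\mathbf{e}_j)=1$, together with an argument showing that the identity-of-indiscernibles and the triangle inequality of $d_\psi$ on suitably chosen slice configurations force $d_k(z_k,z_k)=0$ for all $k$, leads to the equality $d_\psi(\iota_j(x),\iota_j(y))=d_j(x,y)$ for all $x,y\in X_j$. Since $\iota_j$ is then an isometric injection of $(X_j,d_j)$ into $(M,d_\psi)$, nonnegativity, the identity of indiscernibles, symmetry and the triangle inequality for $d_j$ all pass back from their counterparts for $d_\psi$.

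The principal obstacle is the triangle inequality for $d_\psi$: while Lemma~\ref{L2.2} and the convexity of $\psi$ are each routine in isolation, the subtlety is coordinating them so that the monotonicity step produces precisely the numerator $a_i+b_i$ that can subsequently be split as the convex combination $\lambda(a_i/A)+(1-\lambda)(b_i/B)$ to match $d_\psi(x,y)+d_\psi(y,z)$ on the nose.
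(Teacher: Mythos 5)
Your proposal is correct and follows essentially the same route as the paper's proof: the triangle inequality is obtained by first applying Lemma~\ref{L2.2} to the coordinatewise bounds $c_i\le a_i+b_i$ and then splitting $(a_i+b_i)/(A+B)$ as the convex combination $\lambda\,(a_i/A)+(1-\lambda)\,(b_i/B)$, while the converse is recovered via the coordinate-slice embedding and the normalization $\psi(\mathbf{e}_j)=1$. Your explicit attention to the degenerate indices in Lemma~\ref{L2.2} and to verifying $d_k(z_k,z_k)=0$ is, if anything, slightly more careful than the paper, which passes over both points.
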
	

\begin{proof}
Suppose that $d_i$ is a metric on $X_i$ $(i=1,\ldots,n)$.
To prove that $d_\psi$ is a metric on $M$, it suffices to verify the triangle inequality since the other conditions hold trivially.
Let $x:=(x_1,\ldots,x_n),y:=(y_1,\ldots,y_n),z:=(z_1,\ldots,z_n)\in M$.
If $x=y$, $x=z$, or $y=z$, then
\begin{gather}\label{T2.2-2}
d_\psi(x,z)\le d_\psi(x,y)+d_\psi(y,z)
\end{gather}
holds as equality.
Suppose that $x,y,z$ are distinct.
Let
\begin{gather}\notag
\lambda_i:=d_i(x_i,z_i),\;
\vartheta_i:=d_i(x_i,y_i),\; \xi_i:=d_i(y_i,z_i)\;\;(i=1,\ldots,n),\\	\label{T2.2-6}
\lambda:=\sum_{i=1}^{n}\lambda_i,
\vartheta:=\sum_{i=1}^{n}\vartheta_i,\;\xi:=\sum_{i=1}^{n}\xi_i,\; t:=\left(\frac{\vartheta_1}{\vartheta},\ldots,\frac{\vartheta_n}{\vartheta}\right),\;t':=\left(\frac{\xi_1}{\xi},\ldots,\frac{\xi_n}{\xi}\right).
\end{gather}
By the triangle inequality,
\begin{gather}\label{T2.2-3}
\lambda_i\le \vartheta_i+\xi_i\;\;(i=1,\ldots,n),\;\;
\lambda\le \vartheta+\xi.
\end{gather}	
Then
\begin{eqnarray*}
d_\psi(x,z)
&\overset{ \eqref{T2.2-1}}{=}&\lambda\cdot\psi\left(\dfrac{
\lambda_1}{\lambda},\ldots,\dfrac{\lambda_n}{\lambda}\right)\\ 
&\overset{\eqref{L2.3-1},\eqref{T2.2-3}}{\le}&
(\vartheta+\xi)\cdot\psi\left(\dfrac{
\vartheta_1+\xi_1}{\vartheta+\xi},\ldots,\dfrac{
\vartheta_n+\xi_n}{\vartheta+\xi}\right)\\ 
&\overset{\eqref{T2.2-6}}{=}&(\vartheta+\xi)\cdot\psi\left(\dfrac{\vartheta}{\vartheta+\xi}\cdot t+\dfrac{\xi}{\vartheta+\xi}\cdot t'\right)\\
&\le&(\vartheta+\xi)\cdot\left(\dfrac{\vartheta}{\vartheta+\xi}\cdot\psi(t)+ \dfrac{\xi}{\vartheta+\xi}\cdot\psi(t') \right)\;\;\text{($\psi$ is convex)}\\
&=&\vartheta\cdot\psi(t)+\xi\cdot\psi(t')\\
&\overset{ \eqref{T2.2-1}}{=}& d_\psi(x,y)+d_\psi(y,z).
\end{eqnarray*}	
Thus, $d_\psi$ is a metric on $M$.

Suppose that $d_\psi$ is a metric on $M$.
Fix $i=1,\ldots,n$ and $x_j\in X_j$ $(j\ne i)$.
Let $x_i,y_i\in X_i$ and 
\begin{gather}\label{T4.8-9}
x:=(x_1,\ldots,x_{i-1},x_i,x_{i+1},\ldots,x_n),\;\;
y:=(x_1,\ldots,x_{i-1},y_i,x_{i+1},\ldots,x_n).
\end{gather}	
By \eqref{T2.2-1}, 
$d_\psi(x,y)=d_i(x_i,y_i)\cdot\psi(\mathbf{e}_1)=d_i(x_i,y_i)$.
Since $d_\psi$ is symmetric and nonnegative, the same properties hold for $d_i$.
If $x_i=y_i$, then $d_\psi(x,y)=0$ since $x=y$, and consequently, $d_i(x_i,y_i)=0$.
Conversely, if $d_i(x_i,y_i)=0$, then $x=y$ since $d_\psi(x,y)=0$, and consequently, $x_i=y_i$.
Let $z_i\in X_i$ and  $z:=(x_1,\ldots,x_{i-1},x_i,x_{i+1},\ldots,x_n)$.
The fact
$d_\psi(x,z)\le d_\psi(x,y)+d_\psi(y,z)$ implies $d_i(x_i,z_i)\le d_i(x_i,y_i)+d_i(y_i,z_i)$.
Hence, $d_i$ is a metric on $X_i$.
\end{proof}	

\begin{remark}\label{R2.4}
\begin{enumerate}
\item\label{R2.4-1}
Let $\psi:=\psi_p$ be given by \eqref{ppsi} with $p\in[1,\infty]$, and $(X_i,d_i)$ $(i=1,\ldots,n)$ be metric spaces.
By Theorem~\ref{T3.10}, we obtain the $p$-metrics on $M$:
\begin{gather}\label{pmetric}
d_{\psi_p}(x,y)
= \begin{cases}
(d^p_1(x_1,y_1)+\cdots+d^p_n(x_n,y_n))^{\frac{1}{p}} & \text{if } p\in[1,\infty),\\
\max\{d_1(x_1,y_1),\ldots,d_n(x_n,y_n)\} & \text{if } p=\infty
\end{cases} 
\end{gather}
for all $x:=(x_1,\ldots,x_n), y:=(y_1,\ldots,y_n)\in M$.
\item\label{R2.4-2}
Let $\psi\in\pmb{\Psi}_n$.
By Remark~\ref{R2.1}\eqref{R2.1-2} and Theorem~\ref{T3.10},  
\begin{gather}\label{R2.4-3}
\max_{1\le i\le n} d_i(x_i,y_i) \le d_\psi(x,y)\le\sum_{i=1}^{n}d_i(x_i,y_i)
\end{gather}	
 for all $x:=(x_1,\ldots,x_n),y:=(y_1,\ldots,y_n)\in M$.
Thus, the constructed metrics are topologically equivalent to the conventional $p$-metrics.
\item 
The direct implication of Theorem~\ref{T3.10} seems to strengthen
 \cite[Definition~3]{AvgFon00}
since it encompasses the maximum metric
which is not covered there.
\end{enumerate}
\end{remark}	

\begin{lemma}\label{L3.5}
Let $\psi\in\pmb{\Psi}_2$ and
\begin{gather}\label{T5.1.0}
\lambda:=2\psi\left(\frac{1}{2},\frac{1}{2}\right),\;\;\theta:=(1+\lambda)\psi\left(\frac{\lambda}{\lambda+1},\frac{1}{\lambda+1}\right).
\end{gather}
Then $1\le\lambda\le 2$ and $\lambda\le \theta\le 1+\lambda$.
\end{lemma}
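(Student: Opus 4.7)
The proof is a direct application of the two-sided bound \eqref{R2.1-4} from Remark~\ref{R2.1}\eqref{R2.1-2}, which states that for any $\psi\in\pmb{\Psi}_n$ and $(t_1,\ldots,t_n)\in\Omega_n$,
\begin{gather*}
\max\{t_1,\ldots,t_n\}\le\psi(t_1,\ldots,t_n)\le 1.
\end{gather*}
No convexity argument or appeal to Lemma~\ref{L2.2} is required; everything follows by plugging in the specific points of the simplex $\Omega_2$ involved in the definitions of $\lambda$ and $\theta$.

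First I would establish the bounds on $\lambda$. Applying \eqref{R2.1-4} at the point $(1/2,1/2)\in\Omega_2$ yields $1/2\le\psi(1/2,1/2)\le 1$, and multiplying by $2$ gives $1\le\lambda\le 2$ immediately.

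Next I would handle $\theta$. Since $\lambda\ge 1$, the point $\bigl(\tfrac{\lambda}{\lambda+1},\tfrac{1}{\lambda+1}\bigr)$ lies in $\Omega_2$ with first coordinate no smaller than the second, so
\begin{gather*}
\max\Bigl\{\tfrac{\lambda}{\lambda+1},\tfrac{1}{\lambda+1}\Bigr\}=\tfrac{\lambda}{\lambda+1}.
\end{gather*}
Applying \eqref{R2.1-4} at this point gives
\begin{gather*}
\tfrac{\lambda}{\lambda+1}\le\psi\Bigl(\tfrac{\lambda}{\lambda+1},\tfrac{1}{\lambda+1}\Bigr)\le 1,
\end{gather*}
and multiplying through by $1+\lambda>0$ yields $\lambda\le\theta\le 1+\lambda$, completing the proof.

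There is no real obstacle here; the only point worth flagging is the need to verify $\lambda\ge 1$ before identifying the maximum of the coordinates of $\bigl(\tfrac{\lambda}{\lambda+1},\tfrac{1}{\lambda+1}\bigr)$, but this is supplied by the first half of the statement. The entire argument is a one-line substitution into the Remark~\ref{R2.1}\eqref{R2.1-2} inequality applied twice.
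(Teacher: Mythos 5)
Your proposal is correct and follows essentially the same route as the paper: both proofs are a direct two-fold application of the bound \eqref{R2.1-4}, first at $(1/2,1/2)$ to get $1\le\lambda\le 2$ and then at $\bigl(\tfrac{\lambda}{\lambda+1},\tfrac{1}{\lambda+1}\bigr)$ to get $\lambda\le\theta\le 1+\lambda$. Your explicit remark that $\lambda\ge 1$ is needed to identify the maximum of the two coordinates is a small clarification the paper leaves implicit, but the argument is identical in substance.
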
	

\begin{proof}
By \eqref{R2.1-4}, we have $1\le\lambda\le 2$, and
\begin{gather*}
\lambda=(1+\lambda)\max\left\{\frac{\lambda}{\lambda+1},\frac{1}{\lambda+1}\right\}\le
\theta\le (1+\lambda)\left(\dfrac{\lambda}{\lambda+1}+\dfrac{1}{\lambda+1}\right)=1+\lambda.
\end{gather*}		
This completes the proof.
\end{proof}	

\section{Fixed Points Theorems in Product Spaces}\label{S4}
Let $(X_1,d_1)$ and $(X_2,d_2)$ be metric spaces, and $X:=X_1\times X_2$ be equipped with the metric $d_\psi$ given by  \eqref{T2.2-1} for some $\psi\in\pmb{\Psi}_2$.
Define $T:X\to X$ by 
\begin{align}\label{T}
	T(x_1,x_2):=(T_1(x_1,x_2),T_2(x_1,x_2))\;\;\text{for all}\;\;(x_1,x_2)\in X,
\end{align}	
where $T_1:X\to X_1$ and $T_2:X\to X_2$. 

The following theorem proves the existence of fixed points for nonexpansive mappings on the product space.
\begin{theorem}\label{T5.1}
Let $\lambda$ and $\theta$ be given by \eqref{T5.1.0}.
Suppose that the following conditions are satisfied:
\begin{enumerate}
\item\label{T5.1-1}
$X_1$ has the fixed point property for strictly contractive maps;
\item\label{T5.1-2}
$X_2$ has the fixed point property for  Lipschitz continuous maps with constant $\theta$;
\item\label{T5.1-3}
$T$ is a nonexpansive map satisfying
\begin{gather}\label{T5.1.1}
d_1(T_1(x_1,x_2),T_1(x'_1,x'_2))<\dfrac{\lambda}{\theta}\cdot d_\psi((x_1,x_2),(x'_1,x'_2))
\end{gather}	
for all $(x_1,x_2),(x'_1,x'_2)\in X$ with $d_1(x_1,x'_1)>\lambda d_2(x_2,x'_2)$.
\end{enumerate}	
Then $T$ has a fixed point.
\end{theorem}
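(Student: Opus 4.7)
The plan is to reduce the fixed-point problem for $T$ on $X$ to a fixed-point problem on $X_2$ by constructing a selection map $\varphi\colon X_2\to X_1$ that assigns to each $x_2$ the unique fixed point of the partial map $x_1\mapsto T_1(x_1,x_2)$. Throughout the argument, the key analytic object is the positively homogeneous functional
\[
\Phi(a,b):=(a+b)\,\psi\!\left(\tfrac{a}{a+b},\tfrac{b}{a+b}\right)\quad ((a,b)\in\R_+^2\setminus\{0\}),\qquad \Phi(0,0):=0,
\]
which satisfies $\Phi(a,0)=a$ and $\Phi(\lambda b,b)=\theta b$ by \eqref{T5.1.0}, is subadditive because $\psi$ is convex, and is monotone by Lemma~\ref{L2.2}. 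Note that $d_\psi\equiv\Phi\circ(d_1,d_2)$ on pairs of points of $X$.

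\textbf{Step 1 (construction of $\varphi$).} Fix $x_2\in X_2$ and set $S_{x_2}(x_1):=T_1(x_1,x_2)$. For $x_1\ne x_1'$, one has $d_1(x_1,x_1')>0=\lambda d_2(x_2,x_2)$, so assumption \eqref{T5.1-3} applies; together with the identity $d_\psi((x_1,x_2),(x_1',x_2))=d_1(x_1,x_1')\psi(\mathbf{e}_1)=d_1(x_1,x_1')$ and the inequality $\lambda/\theta\le1$ from Lemma~\ref{L3.5}, this yields $d_1(S_{x_2}(x_1),S_{x_2}(x_1'))<d_1(x_1,x_1')$. Thus $S_{x_2}$ is strictly contractive on $X_1$, so by \eqref{T5.1-1} and Remark~\ref{R5.3}\eqref{R5.3-1} it admits a unique fixed point $\varphi(x_2)\in X_1$.

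\textbf{Step 2 (Lipschitz estimate for $\varphi$).} Let $x_2,x_2'\in X_2$, $y_1:=\varphi(x_2)$, $y_1':=\varphi(x_2')$, $a:=d_1(y_1,y_1')$, $b:=d_2(x_2,x_2')$. Assume for contradiction that $a>\lambda b$. The decomposition $(a,b)=(\lambda b,b)+(a-\lambda b,0)$ and subadditivity of $\Phi$ give
\[
d_\psi((y_1,x_2),(y_1',x_2'))=\Phi(a,b)\le \Phi(\lambda b,b)+\Phi(a-\lambda b,0)=\theta b+(a-\lambda b)\le \tfrac{\theta}{\lambda}\,a,
\]
where the last inequality uses $b<a/\lambda$ together with $\theta\ge\lambda$. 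Applying \eqref{T5.1-3} to $(y_1,x_2),(y_1',x_2')$, we get $a=d_1(T_1(y_1,x_2),T_1(y_1',x_2'))<\frac{\lambda}{\theta}\Phi(a,b)\le a$, a contradiction. Hence $d_1(\varphi(x_2),\varphi(x_2'))\le\lambda d_2(x_2,x_2')$ for all $x_2,x_2'\in X_2$.

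\textbf{Step 3 (fixed point of $g$ and conclusion).} Define $g(x_2):=T_2(\varphi(x_2),x_2)$. Combining Remark~\ref{R2.4}\eqref{R2.4-2}, the nonexpansiveness of $T$, the monotonicity of $\Phi$ and Step~2 yields
\[
d_2(g(x_2),g(x_2'))\le d_\psi(T(\varphi(x_2),x_2),T(\varphi(x_2'),x_2'))\le \Phi\!\left(d_1(\varphi(x_2),\varphi(x_2')),\,d_2(x_2,x_2')\right)\le \Phi(\lambda b,b)=\theta\, d_2(x_2,x_2').
\]
Thus $g$ is Lipschitz continuous with constant $\theta$, and by \eqref{T5.1-2} it has a fixed point $\bx_2\in X_2$. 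Setting $\bx_1:=\varphi(\bx_2)$, the defining property of $\varphi$ gives $T_1(\bx_1,\bx_2)=\bx_1$, and by construction $T_2(\bx_1,\bx_2)=g(\bx_2)=\bx_2$, so $(\bx_1,\bx_2)$ is a fixed point of $T$. The main obstacle is the inequality $\Phi(a,b)\le(\theta/\lambda)a$ in Step~2: rather than being a formal consequence of nonexpansiveness of $T$, it requires identifying the sublinear structure of $\Phi$ and decomposing $(a,b)$ along the ray through $(\lambda,1)$, where $\Phi$ attains precisely the extremal value $\theta b$ hard-wired into the definition \eqref{T5.1.0} of $\theta$.
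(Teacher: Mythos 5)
Your proof is correct and follows essentially the same strategy as the paper's: reduce to a fixed-point problem on $X_2$ via the unique-fixed-point selection $\varphi$ (the paper's $\xi$), show by contradiction that $\varphi$ is $\lambda$-Lipschitz, and verify that $x_2\mapsto T_2(\varphi(x_2),x_2)$ is $\theta$-Lipschitz so that assumption \eqref{T5.1-2} applies. The only (harmless) deviation is in your Step~2, where you obtain $\Phi(a,b)\le(\theta/\lambda)a$ via subadditivity of $\Phi$, the decomposition $(a,b)=(\lambda b,b)+(a-\lambda b,0)$ and $\theta\ge\lambda$, whereas the paper reaches the same bound in one stroke from the monotonicity Lemma~\ref{L2.2} by increasing the second coordinate from $b$ to $a/\lambda$.
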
	

\begin{proof}
\textbf{Claim 1.}
For each $x_2\in X_2$,
the map $T_{x_2}:X_1\to X_1$ given by
\begin{gather}\label{def-Tz}
T_{x_2}(x_1):=T_1(x_1,x_2)\;\;\text{for all}\;\; x_1\in X_1
\end{gather}
has a unique fixed point $\xi(x_2)\in X_1$.
Indeed, let $x_1,x'_1\in X_1$ with $x_1\ne x'_1$.
Then $d_1(x_1,x'_1)>0$, and consequently,
\begin{eqnarray*}
d_1(T_{x_2}(x_1), T_{x_2}(x'_1))
&\overset{\eqref{def-Tz}}{=}&
d_1(T_1(x_1,x_2),T_1(x'_1,x_2))\\
&\overset{\eqref{T5.1.1}}{<}&
\dfrac{\lambda}{\theta} d_\psi((x_1,x_2),(x'_1,x_2))\\
&\overset{\eqref{T2.2-1}}{=}&
\dfrac{\lambda}{\theta} d_1(x_1,x'_1)\psi(1,0)\le d_1(x_1,x'_1),
\end{eqnarray*}
where the last inequality follows from the fact that $\psi(1,0)=1$ and 
$\lambda/\theta\le 1$ (Lemma~\ref{L3.5}).
Thus, $T_{x_2}$ is strictly contractive on $X_1$.
By assumption \eqref{T5.1-1}, it has a unique fixed point $\xi(x_2)\in X_1$.\\
\textbf{Claim 2.}
For any $x_2,x_2'\in X_2$, 
\begin{gather}\label{T5.4.3}
A_1\le \lambda\cdot A_2,
\end{gather}
where 
\begin{gather}\label{T5.4.10}
A_1:=d_1(\xi(x_2),\xi(x_2'))\;\;\text{and}\;\;A_2:=d_2(x_2,x_2').
\end{gather}
Indeed, let $\xi(x_2)$ and $\xi(x_2')$ be the corresponding  fixed points of the maps $T_{x_2}$ and $T_{x_2'}$.
Then
\begin{gather}\label{T5.4.4}
\xi(x_2)=T_1(\xi(x_2),x_2),\;\;\xi(x_2')=T_1(\xi(x_2'),x_2').
\end{gather}
Suppose that $A_1>\lambda A_2$.
Then 
\begin{eqnarray}
A_1
&\overset{\eqref{T5.4.4}}{=}&
d_1(T_1(\xi(x_2),x_2),T_1(\xi(x'_2),x'_2))\notag\\ 
&\overset{\eqref{T5.1.1}}{<}&
\dfrac{\lambda}{\theta}
d_\psi((\xi(x_2),x_2),(\xi(x'_2),x'_2)) \notag\\
&\overset{\eqref{T2.2-1}}{=}&
\dfrac{\lambda}{\theta}(A_1+A_2)\psi\left(\dfrac{A_1}{A_1+A_2},\dfrac{A_2}{A_1+A_2}\right) \notag\\
&\overset{\eqref{L2.3-1}}{\le}&
\dfrac{\lambda}{\theta}
\left(1+\dfrac{1}{\lambda}\right)\psi\left(\dfrac{A_1}{A_1+A_1/\lambda},\dfrac{A_1/\lambda}{A_1+A_1/\lambda} \right)\cdot A_1\notag\\ 
&=& \dfrac{\lambda}{\theta}\left(1+\dfrac{1}{\lambda}\right)\psi\left(\dfrac{\lambda}{\lambda+1},\dfrac{1}{\lambda+1}\right)\cdot A_1
= A_1,\label{T5.1.2}
\end{eqnarray}	
a contradiction.\\
\textbf{Claim 3.}
The map $G: X_2\to X_2$ given by
\begin{gather}\label{G}
G(x_2)=T_2(\xi(x_2),x_2)\;\;\text{for all}\;\;x_2\in X_2
\end{gather}	 
is Lipschitz continuous with constant $\theta$.
Indeed, for any $x_2,x'_2\in X_2$, 
\begin{eqnarray*}
d_2(G(x_2),G(x_2'))
&\overset{\eqref{G}}{=}&
d_2(T_2(\xi(x_2),x_2),T_2(\xi(x'_2),x'_2))\\
&\overset{\eqref{R2.4-3}}{\le}& 
d_\psi(T(\xi(x_2),x_2),T(\xi(x'_2),x'_2))\\
&\le&
d_\psi((\xi(x_2),x_2),(\xi(x_2'),x_2'))\;\;\;\;(T\;\text{is nonexpansive})\\
&\overset{\eqref{T2.2-1}}{=}& (A_1+A_2)\psi\left(\dfrac{A_1}{A_1+A_2},\dfrac{A_2}{A_1+A_2}\right)\\
&\overset{\eqref{L2.3-1},\eqref{T5.4.3}}{\le}&
(1+\lambda)\psi\left(\dfrac{\lambda A_2}{\lambda A_2+A_2},\dfrac{A_2}{\lambda A_2+A_2}\right) \cdot A_2\\
&=&(1+\lambda)\psi\left(\dfrac{\lambda}{\lambda+1},\dfrac{1}{\lambda+1}\right) \cdot A_2\\
&=&\theta d_2(x_2,x_2').
\end{eqnarray*}	
\textbf{Claim 4.}
There exists an $\bx_2\in X_2$ such that $(\xi(\bx_2),\bx_2)=T(\xi(\bx_2),\bx_2)$.
Indeed, by {Claim 3} and assumption \eqref{T5.1-2}, there exists a point $\bx_2 \in X_2$ such that
$\bar x_2=T_2(\xi(\bar x_2),\bar x_2).$
In view of the property of $\xi(\bx_2)$, we have $\xi(\bx_2)=T_1(\xi(\bar x_2),\bar x_2)$.
This completes the proof.
\end{proof}	

\begin{remark}\label{R5.5}
\begin{enumerate}
\item
In general, a nonexpansive map does not necessarily admit a fixed point. 
However, the existence of fixed points can be ensured under additional assumptions on the underlying space or the map; see, for instance, \cite[Theorem~5]{Bai88}.  
Similarly, fixed points for Lipschitz maps can also be guaranteed if appropriate assumptions are imposed; see \cite[Theorem~2.7]{Bis23}, \cite[Lemma~3.1]{Gor96} and \cite[Theorem~6]{Gor01}.
\item 
Theorem~\ref{T5.1} studies fixed point properties for nonexpansive maps on a product of metric spaces endowed with the general metric.
Fixed point results on a product of Banach spaces equipped with $p$-norms $(p\in[1,\infty])$ under different assumptions can be found in \cite[Theorems 2.1, 2.2 \& 2.3]{YanKir88} and \cite[Theorems 2.1 \& 2.2]{TanXu91}.
\item	
By Lemma~\ref{L3.5}, $\frac{\lambda}{\theta}\le 1$ for any $\psi\in\pmb{\Psi}_2$.
Let $\Omega_2:=\{(t_1,t_2)\in\R^2_+ \mid t_1+t_2=1\}$ and consider the function $\psi_\infty$ given by \eqref{ppsi}, i.e.,
\begin{gather}\label{R5.5-1}
\psi_\infty(t_1,t_2):=\max\{t_1,t_2\}\;\;\text{for all}\;\;(t_1,t_2)\in\Omega_2.
\end{gather}	
By \eqref{pmetric},
\begin{gather}\label{R5.5-2}
d_{\psi_\infty}((x_1,x_2),(x'_1,x'_2))=\max\{d_1(x_1,x'_1),d_2(x_2,x'_2)\}
\end{gather}
for all $(x_1,x_2),(x'_1,x'_2)\in X_1\times X_2.$
By \eqref{T5.1.0} and \eqref{R5.5-1}, we have $\lambda=\theta=1$.
In this case, the Lipschitz continuity with constant $\theta$ reduces to the nonexpansiveness, and consequently, Theorem~\ref{T5.1} strengthens \cite[Theorem~3.1]{EspKir01} since condition \eqref{T5.1.1} is required to hold for all pairs of points satisfying
$d_1(x_1,x'_1)>d_2(x_2,x'_2)$, while in \cite[Theorem~3.1]{EspKir01} the corresponding condition is imposed for all pairs of points with $d_1(x_1,x'_1)\ne d_2(x_2,x'_2)$.
\end{enumerate}
\end{remark}	

The next example illustrates Theorem~\ref{T5.1}.
\begin{example}\label{E4.3}
Let $X_1=X_2:=[-1,1]$ be  equipped with the metric induced by the norm $|\cdot|$ on $\R$, and $X:=X_1\times X_2$ be equipped with the metric $d_{\psi_p}$ where
$\psi_p$ is given by \eqref{ppsi} for some $p\in[1,\infty)$.
Then
\begin{gather}\label{E5.6-1}
\lambda:=2\psi_p\left(\frac{1}{2},\frac{1}{2}\right)=2^{\frac{1}{p}},\;\;
\theta:=(1+\lambda)\psi_p\left(\frac{\lambda}{\lambda+1},\frac{1}{\lambda+1}\right)=3^\frac{1}{p}.
\end{gather}
Let $\alpha:=\frac{1}{2}\left(\frac{2}{3}\right)^{\frac{1}{p}}$, $\beta:=\frac{1}{2}\left(\frac{4}{3}\right)^{\frac{1}{p}}$ and $\gamma:=\left(\frac{1}{3}\right)^{\frac{1}{p}}.$
Define $T: X\to X$ by 
\begin{equation*}
T(x):=(T_1(x),T_2(x)):=(\alpha x_1+\beta x_2, \gamma x_2)\;\;\text{for all}\;\;x:=(x_1,x_2)\in X.
\end{equation*}	
Since $p\geq 1$, the function $f(t):=t^{\frac{1}{p}}$ for all $t\ge 0$ is concave, and consequently,
\begin{equation*}
\al+\be=\frac{f(2/3)+f(4/3)}{2}\leq f\left(\frac{2/3+4/3}{2}\right)=f(1)=1.
\end{equation*}
Then $|T_1(x_1,x_2)|=|\alpha x_1+\beta x_2|\leq \alpha|x_1|+\beta|x_2|\leq \alpha+\beta\leq 1$ and  $|T_2(x_1,x_2)|=|\gamma x_2|=\gamma |x_2|\leq 1$
for all $(x_1,x_2)\in X$.
Thus, the map $T$ is well-defined. 
Observe that 
\begin{gather}\label{E5.6-2}
(a+b)^p\leq 2^{p-1}(a^p+b^p)\;\;\text{for all}\;\;a,b\ge 0.
\end{gather}
Indeed, since $p\ge 1$, the function $g(t):=t^p$ for all $t\ge 0$ is convex.
Thus, 
\begin{gather*}
g\left(\frac{a+b}{2}\right)\le\frac{g(a)+g(b)}{2}\Leftrightarrow \left(\frac{a+b}{2}\right)^p\le\dfrac{a^p+b^p}{2}
\Leftrightarrow (a+b)^p\leq 2^{p-1}(a^p+b^p).
\end{gather*}	
Let $x:=(x_1,x_2),x':=(x'_1,x'_2)\in X$.
Then
\begin{align*}
d_{\psi_p}(T(x), T(x'))&=\left(|\alpha(x_1-x_1')+\beta(x_2-x_2')|^p+|\gamma (x_2-x_2')|^p\right)^{\frac{1}{p}}\\
&\le\left((\alpha|x_1-x_1'|+\beta|x_2-x_2'|)^p+\ga^p| (x_2-x_2')|^p\right)^{\frac{1}{p}}\\
&\leq \left(2^{p-1}\alpha^p|x_1-x_1'|^p+(2^{p-1}\beta^p+\gamma^p)|x_2-x_2'|^p\right)^{\frac{1}{p}}\\
&= \left(\frac{1}{3}|x_1-x_1'|^p+|x_2-x_2'|^p\right)^{\frac{1}{p}}\\
&\leq\left(|x_1-x_1'|^p+|x_2-x_2'|^p\right)^{\frac{1}{p}}
=d_{\psi_p}(x,x').
\end{align*}
Thus, $T$ is nonexpansive.
If $|x_1-x'_1|>\lambda |x_2-x'_2|$, then
\begin{align*}
|T_1(x)-T_1(x')|
&=|\alpha(x_1-x_1')+\beta(x_2-x_2')|\\
&<\left(\alpha+\frac{\beta}{\lambda}\right)|x_1-x'_1|\\
&=\left(\frac{\lambda}{2\theta}+\frac{\lambda}{2\theta}\right)|x_1-x'_1|
\le\dfrac{\lambda}{\theta}\cdot d_{\psi_p}(x,x').
\end{align*}	
Thus, condition \eqref{T5.1.1} is satisfied.
By the intermediate value theorem, $X_1$ and $X_2$ have the fixed point property for every continuous self-maps.
Thus, conditions \eqref{T5.1-1} and \eqref{T5.1-2} in Theorem~\ref{T5.1} are satisfied. 
Thus, $\text{Fix}T\neq\emptyset$. 
Moreover, direct computation shows that $\text{Fix}T=\{(0,0)\}$.
\end{example}

The following theorem provides a parallel fixed point result to Theorem~\ref{T5.1}.
\begin{theorem}\label{T5.2}
Let $\lambda$ and $\theta$ be given by \eqref{T5.1.0}.
Suppose that the following conditions are satisfied:
\begin{enumerate}
\item\label{T5.2-1}
$X_1$ has the fixed point property for nonexpansive maps;
\item\label{T5.2-2}
$X_2$ has the fixed point property for  Lipschitz maps with constant $\theta$;
\item\label{T5.2-3}
$T$ is a nonexpansive map such that condition \eqref{T5.1.1} holds for all $(x_1,x_2),(x'_1,x'_2)\in X$ with $x_1\ne x'_1$ and 
$x_2\ne x'_2$.
\end{enumerate}	
Then $T$ has a fixed point.
\end{theorem}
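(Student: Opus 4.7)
The plan is to follow the structure of the proof of Theorem~\ref{T5.1}. Namely, for each $x_2\in X_2$, I will construct an auxiliary self-map $T_{x_2}:X_1\to X_1$ by $T_{x_2}(x_1):=T_1(x_1,x_2)$ and extract a fixed point $\xi(x_2)\in X_1$; then the map $G(x_2):=T_2(\xi(x_2),x_2)$ will be shown to be Lipschitz continuous with constant $\theta$, so that assumption \eqref{T5.2-2} yields some $\bar x_2\in X_2$ with $G(\bar x_2)=\bar x_2$, and then $(\xi(\bar x_2),\bar x_2)$ is a fixed point of $T$.

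The delicate point is the first step. In the proof of Theorem~\ref{T5.1}, condition \eqref{T5.1.1} applied with $x_2=x_2'$ immediately delivers strict contractivity of $T_{x_2}$. In Theorem~\ref{T5.2}, however, condition \eqref{T5.1.1} is imposed only for pairs with \emph{both} coordinates distinct and cannot be invoked here. Instead, I will exploit the global nonexpansiveness of $T$ together with the coordinatewise bound \eqref{R2.4-3}: for $x_1,x_1'\in X_1$,
\begin{align*}
d_1(T_{x_2}(x_1),T_{x_2}(x_1'))
&\le d_\psi\bigl(T(x_1,x_2),T(x_1',x_2)\bigr)\\
&\le d_\psi\bigl((x_1,x_2),(x_1',x_2)\bigr)=d_1(x_1,x_1'),
\end{align*}
so $T_{x_2}$ is nonexpansive. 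Assumption \eqref{T5.2-1} then supplies a fixed point of $T_{x_2}$; I select one and denote it by $\xi(x_2)$.

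Next, I will establish the analog of \textbf{Claim 2} of Theorem~\ref{T5.1}: $d_1(\xi(x_2),\xi(x_2'))\le \lambda\cdot d_2(x_2,x_2')$ for all $x_2,x_2'\in X_2$. The case $x_2=x_2'$ is trivial. For $x_2\ne x_2'$, suppose by contradiction that $A_1:=d_1(\xi(x_2),\xi(x_2'))>\lambda A_2:=\lambda\cdot d_2(x_2,x_2')$. Then $A_1>0$, hence $\xi(x_2)\ne\xi(x_2')$; combined with $x_2\ne x_2'$, this allows condition \eqref{T5.1.1} to be applied to $(\xi(x_2),x_2)$ and $(\xi(x_2'),x_2')$. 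The remainder of the chain of estimates used in the proof of Theorem~\ref{T5.1} then produces $A_1<A_1$, the desired contradiction.

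From here the argument runs exactly as in Theorem~\ref{T5.1}: the inequality $A_1\le\lambda A_2$, together with the nonexpansiveness of $T$, the bound \eqref{R2.4-3}, Lemma~\ref{L2.2} and the convexity of $\psi$, yields $d_2(G(x_2),G(x_2'))\le \theta\cdot d_2(x_2,x_2')$ for all $x_2,x_2'\in X_2$; assumption \eqref{T5.2-2} then provides $\bar x_2\in X_2$ with $\bar x_2=T_2(\xi(\bar x_2),\bar x_2)$; and since $\xi(\bar x_2)=T_1(\xi(\bar x_2),\bar x_2)$ by construction, $(\xi(\bar x_2),\bar x_2)$ is a fixed point of $T$. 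The principal obstacle is the nonexpansiveness step above, where the narrower form of hypothesis \eqref{T5.1.1} blocks the direct argument used in Theorem~\ref{T5.1}; the remedy is the detour through the global nonexpansiveness of $T$ on the product space.
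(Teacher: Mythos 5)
Your proposal is correct and follows essentially the same route as the paper's own proof: establishing nonexpansiveness of $T_{x_2}$ via the global nonexpansiveness of $T$ and \eqref{R2.4-3}, invoking assumption \eqref{T5.2-1} to select $\xi(x_2)$, verifying $A_1\le\lambda A_2$ by contradiction (where your observation that $A_1>\lambda A_2>0$ forces $\xi(x_2)\ne\xi(x_2')$ mirrors the paper's case split on whether $\xi(x_2)=\xi(x_2')$), and then reusing Claims~3 and~4 of Theorem~\ref{T5.1}. You also correctly pinpoint the one place where the argument must deviate from Theorem~\ref{T5.1}, namely that the restricted form of hypothesis \eqref{T5.1.1} no longer yields strict contractivity of $T_{x_2}$.
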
	

\begin{proof}
For each $x_2\in X_2$, the map $T_{x_2}:X_1\to X_1$ given by \eqref{def-Tz} is nonexpansive.
Indeed, for any $x_1,x'_1\in X_1$, we have
\begin{eqnarray*}
d_1(T_{x_2}(x_1), T_{x_2}(x'_1))
&\overset{\eqref{def-Tz}}{=}&
d_1(T_1(x_1,x_2),T_1(x'_1,x_2))\\
&\overset{\eqref{R2.4-3}}{\le}& d_\psi(T(x_1,x_2),T(x'_1,x_2))\\
&\le& d_\psi((x_1,x_2),(x'_1,x_2))\;\;(T\;\text{is nonexpansive})\\
&\overset{\eqref{T2.2-1}}{=}&
d_1(x_1,x'_1).
\end{eqnarray*}
Thus, $T_{x_2}$ is nonexpansive.
By assumption \eqref{T5.2-1}, the set ${\rm{Fix}}T_{x_2}$ is nonempty.
Let $\xi:X_2\to X_1$ be a selection of the set-valued mapping $x_2\mapsto {\rm{Fix}}T_{x_2}$.
Let $x_2,x_2'\in X_2$ and $A_1,A_2$ be given by \eqref{T5.4.10}.
Then condition \eqref{T5.4.3} is satisfied.
Indeed, if $\xi(x_2)=\xi(x_2')$, then \eqref{T5.4.3} holds trivially.
If $\xi(x_2)\ne\xi(x_2')$, then it must hold that $x_2\ne x_2'$.
We have
\begin{eqnarray*}
A_1
&\overset{\eqref{T5.4.4}}{=}&
d_1(T_1(\xi(x_2),x_2),T_1(\xi(x'_2),x'_2))\\
&\overset{\eqref{T5.1.1}}{<}&
\dfrac{\lambda}{\theta}
d_\psi((\xi(x_2),x_2),(\xi(x'_2),x'_2))\\
&\overset{\eqref{T2.2-1}}{=}&
\dfrac{\lambda}{\theta}(A_1+A_2)\psi\left(\dfrac{A_1}{A_1+A_2},\dfrac{A_2}{A_1+A_2}\right).
\end{eqnarray*}	
If $A_1>\lambda A_2$, then as shown in \eqref{T5.1.2}, we obtain the contradiction that $A_1<A_1$.
Let $G:X_2\to X_2$ be defined by \eqref{G}.
By Claim~3 in the proof of Theorem~\ref{T5.1}, the mapping $G$ is Lipschitz continuous with constant $\theta$.  
The remainder of the proof follows the same lines as Claim~4 in the proof of Theorem~\ref{T5.1}.
\end{proof}	

\begin{remark}
\begin{enumerate}	
\item	
When $\psi:=\psi_\infty$ is given by \eqref{R5.5-1}, we have $\lambda = \theta = 1$, and consequently, the Lipschitz continuity  reduces to the nonexpansiveness. 
In this case, Theorem~\ref{T5.2} recaptures \cite[Theorem~3.3]{EspKir01}.
\item 
Theorems~\ref{T5.1} and \ref{T5.2} are  incomparable since the sets of points satisfying condition~\eqref{T5.1.1} in each theorem cannot be compared in general. 
Nevertheless, in view of Remark~\ref{R5.3}\eqref{R5.3-1}, the assumption~\eqref{T5.2-1} in Theorem~\ref{T5.2} is stronger than the assumption~\eqref{T5.1-1} in Theorem~\ref{T5.1}.
\end{enumerate}
\end{remark}	

The following example illustrates  Theorem~\ref{T5.2}.
\begin{example}
Let $X_1$ and $X_2$ be the metric spaces given in Example~\ref{E4.3}, 
$X:=X_1\times X_2$ be equipped with the metric $d_{\psi_p}$ where
$\psi_p$ is given by \eqref{ppsi} for some $p\in[1,\infty)$, $\lambda$ and $\theta$ be given by \eqref{E5.6-1}.
Define $T: X\to X$ by 
\begin{equation*}
T(x_1,x_2):=(T_1(x),T_2(x)):=\left(\frac{\lambda}{2\theta}\cdot x_1, x_2\right)\;\;\text{for all}\;\;(x_1,x_2)\in X.
\end{equation*}	
Note that $\frac{\lambda}{2\theta}=\frac{1}{2}\left(\frac{2}{3}\right)^{\frac{1}{p}}<1$.
The map $T^{\Omega}$ is well-defined since 
\begin{gather*}
|T_1(x_1,x_2)|=\frac{\lambda}{2\theta}|x_1|\le 1\;\;\text{and}\;\;
|T_2(x_1,x_2)|=|x_2|\le 1\;\;\text{for any}\;\;(x_1,x_2)\in X.
\end{gather*}	
Let  $x:=(x_1,x_2),x':=(x'_1,x'_2)\in X$.
Then
\begin{align*}
d_{\psi_p}(T(x), T(x'))=\left(\left|\frac{\lambda}{2\theta}(x_1-x_1')\right|^p+|x_2-x_2'|^p\right)^{\frac{1}{p}}
\le d_{\psi_p}(x,x').
\end{align*}
Thus, $T$ is nonexpansive.
If $x_1\ne x'_1$ and $x_2\ne x'_2$, then 
\begin{align*}
|T_1(x)-T_1(x')|=\left|\frac{\lambda}{2\theta}(x_1-x_1')\right|
<\frac{\lambda}{\theta}|x_1-x_1'|\leq\dfrac{\lambda}{\theta} d_{\psi_p}(x,x').
\end{align*}	
Hence, condition \eqref{T5.1-3} is satisfied. 
By the intermediate value theorem, $X_1$ and $X_2$ have the fixed point property for every continuous self-maps.
Thus, conditions \eqref{T5.2-1} and \eqref{T5.2-2} in Theorem~\ref{T5.2} are satisfied. 
Thus, $\text{Fix}T\neq\emptyset$. 
Moreover, direct computation shows that $\text{Fix}T=\{0\}\times [-1,1]$.
\end{example}

\section{Approximate Fixed Point Theorems in Product Spaces}\label{S5}
%\subsection{Sufficient conditions for the existence of approximate fixed point sequences}
In this section, we study a particular case of the model studied in Section~\ref{S4}.
More precisely, we assume that $(X_1,\|\cdot\|)$ is a normed vector space with the associated metric given by
\begin{gather*}
	d_1(x_1,x'_1):=\|x_1-x'_1\|\;\;\text{for all}\;\;x_1,x'_1\in X_1.
\end{gather*}	
Let $\Omega$ be a nonempty bounded closed convex subset of $X_1$,  
and $T^\Omega:=(T^\Omega_1,T_2)$ be the restriction of the map $T:=(T_1,T_2)$ given by \eqref{T} on $X_{\Omega}:=\Omega\times X_2$.

The following result is needed for our study; see \cite[Theorem~1.1]{BerRus20} and \cite[Remark, p.69]{Shi76}.
\begin{lemma}\label{L5.6}
Let $X_1$ be a Banach space, $Q:\Omega\to \Omega$ be a nonexpansive map, $\omega\in \Omega$, $\al\in(0,1)$, and
$P:\Omega\to \Omega$ given by 
$P:=\al I+(1-\al)Q$ where $I$ is the identity map.
Then
\begin{gather*}
\|P^{n+1}(\omega)-P^n(\omega)\|\to 0\;\;\text{as}\;\; n\to\infty.	
\end{gather*}	
\end{lemma}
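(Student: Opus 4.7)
The plan is to identify the conclusion with the asymptotic regularity of the Krasnoselskii--Mann iteration generated by the nonexpansive map $Q$, and to derive it from the classical argument of Ishikawa. First, set $x_n := P^n(\omega)$, so that $x_{n+1} = \alpha x_n + (1-\alpha)Q(x_n)$ and hence
\[
\|P^{n+1}(\omega) - P^n(\omega)\| \;=\; \|x_{n+1} - x_n\| \;=\; (1-\alpha)\,\|Q(x_n) - x_n\|.
\]
Thus the conclusion is equivalent to $\|Q(x_n) - x_n\|\to 0$. Observe that $P$ sends $\Omega$ into itself (since $I$ and $Q$ do and $\Omega$ is convex) and is nonexpansive as a convex combination of nonexpansive maps; consequently the sequence $c_n := \|x_{n+1} - x_n\|$ is monotonically nonincreasing, because applying nonexpansiveness of $P$ to the pair $(x_n,x_{n+1})$ yields $c_{n+1}\le c_n$. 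Hence $c_n\downarrow c$ for some $c\ge 0$, and equivalently $\|Q(x_n)-x_n\|\to c/(1-\alpha)$.

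The core step is to rule out $c>0$. I would invoke Ishikawa's asymptotic regularity lemma, whose quantitative heart is an inequality of the schematic form
\[
\bigl(1+m(1-\alpha)\bigr)\,\|Q(x_n)-x_n\| \;\le\; \|x_{n+m}-x_n\| + B\,\|Q(x_n)-x_n\|,
\]
valid for all $n,m\ge 0$ with a constant $B$ depending only on $\alpha$. This is proved by induction on $m$, combining nonexpansiveness of $Q$ with the averaging identity $x_{k+1}=\alpha x_k+(1-\alpha)Q(x_k)$. Since $\|x_{n+m}-x_n\|\le\diam\Omega<\infty$ by boundedness of $\Omega$, the right-hand side remains bounded in $m$ while the coefficient on the left grows linearly; hence $\|Q(x_n)-x_n\|$ cannot be bounded below by a positive constant, which combined with the monotonicity of $c_n$ forces $c=0$. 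This is the content of \cite[Theorem~1.1]{BerRus20} and \cite[Remark, p.~69]{Shi76}, to which one may directly appeal.

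The main obstacle is the Ishikawa inequality itself: the monotonicity of $c_n$ alone is not sufficient, and the induction requires a careful bookkeeping that intertwines nonexpansiveness estimates with the geometric factor $(1-\alpha)^m$ arising from iterating the contraction part of $P$. Once this inequality is in hand, boundedness of $\Omega$ closes the argument immediately, while the remaining ingredients---the self-map property of $P$, the monotonicity of $c_n$, and the reformulation of $P^{n+1}\omega-P^n\omega$ as $(1-\alpha)(Q(x_n)-x_n)$---are routine.
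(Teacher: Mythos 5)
Your proposal is correct and follows essentially the same route as the paper, which offers no proof of its own but simply appeals to Ishikawa's asymptotic regularity result via \cite[Theorem~1.1]{BerRus20} and \cite[Remark, p.~69]{Shi76}; your reduction to $\|Q(x_n)-x_n\|\to 0$, the use of boundedness of $\Omega$, and the final appeal to those same references match the paper's intent. One small caution: your ``schematic'' Ishikawa inequality cannot hold with a constant $B$ depending only on $\alpha$ (that would force $\|Q(x_n)-x_n\|=0$ for every $n$); the correct error term carries the factor $\alpha^{-m}$ multiplied by the decrement $\|Q(x_n)-x_n\|-\|Q(x_{n+m})-x_{n+m}\|$, which is exactly why the monotone convergence of $c_n$ is needed before one can let $m\to\infty$ --- a point your closing paragraph acknowledges and the cited references supply.
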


The following theorem proves the existence of an approximate fixed point sequence  for nonexpansive maps on the product space.
\begin{theorem}\label{T5.7}
Let $\psi\in\pmb{\Psi}_2$, $\lambda$ be given in \eqref{T5.1.0}, $X_1$ be a Banach space, and $X_2$ be a metric space with the approximate fixed point property for Lipschitz continuous maps with constant $\lambda$.
Suppose that $T^\Omega$ is a nonexpansive map satisfying
\begin{gather}\label{T5.3.2}
\|T_1^\Omega(x_1,x_2)-T_1^\Omega(x'_1,x'_2)\|\le \dfrac{1}{\lambda}\cdot d_\psi((x_1,x_2),(x'_1,x'_2))
\end{gather}	
for all $(x_1,x_2),(x'_1,x'_2)\in X_{\Omega}$ with $\|x_1-x'_1\|\le d_2(x_2,x'_2)$.
Then $T^\Omega$ has an approximate fixed point sequence.
\end{theorem}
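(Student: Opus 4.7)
The plan is to couple a Krasnoselskii--Mann averaging iteration in the first coordinate with the approximate fixed point property of $X_2$ in the second. Fix any $\omega \in \Omega$ and set $\alpha := 1/2$. For each $k \ge 0$, define $u_k : X_2 \to \Omega$ iteratively by $u_0 \equiv \omega$ and $u_{k+1}(x_2) := \alpha u_k(x_2) + (1-\alpha) T^\Omega_1(u_k(x_2), x_2)$; for each fixed $x_2$, the sequence $\{u_k(x_2)\}_k$ is the averaged iteration of the nonexpansive self-map $T^\Omega_1(\cdot, x_2)$ of the bounded closed convex set $\Omega$, to which Lemma~\ref{L5.6} applies.

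The first key step is to prove by induction on $k$ that $u_k$ is nonexpansive. Assuming $\|u_k(x_2) - u_k(x_2')\| \le d_2(x_2, x_2')$, condition \eqref{T5.3.2} applies at $(u_k(x_2), x_2)$ and $(u_k(x_2'), x_2')$; setting $A_1 := \|u_k(x_2) - u_k(x_2')\|$ and $A_2 := d_2(x_2, x_2')$, Lemma~\ref{L2.2} applied to the pair $(A_1, A_2) \le (A_2, A_2)$ together with $\lambda = 2\psi(1/2, 1/2)$ yields $d_\psi((u_k(x_2), x_2), (u_k(x_2'), x_2')) \le \lambda d_2(x_2, x_2')$, so \eqref{T5.3.2} gives $\|T^\Omega_1(u_k(x_2), x_2) - T^\Omega_1(u_k(x_2'), x_2')\| \le d_2(x_2, x_2')$, and convexity of the norm propagates nonexpansiveness from $u_k$ to $u_{k+1}$. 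The same comparison, combined with the nonexpansiveness of $T^\Omega$ and \eqref{R2.4-3}, shows that the map $G_k : X_2 \to X_2$ defined by $G_k(x_2) := T_2(u_k(x_2), x_2)$ is Lipschitz continuous with constant $\lambda$.

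For each $n \in \mathbb{N}$, the assumed approximate fixed point property of $X_2$ for Lipschitz maps with constant $\lambda$, applied to $G_{k_n}$ (with $k_n$ chosen below), supplies a point $x_2^n \in X_2$ with $d_2(x_2^n, G_{k_n}(x_2^n)) < 1/n$. Setting $x_1^n := u_{k_n}(x_2^n)$, the second-coordinate residual satisfies $d_2(x_2^n, T_2(x_1^n, x_2^n)) = d_2(x_2^n, G_{k_n}(x_2^n)) < 1/n$, while the first-coordinate residual equals $\frac{1}{1-\alpha}\|u_{k_n+1}(x_2^n) - u_{k_n}(x_2^n)\|$; by \eqref{R2.4-3}, once both residuals tend to zero, so does $d_\psi((x_1^n, x_2^n), T^\Omega(x_1^n, x_2^n))$, yielding the desired approximate fixed point sequence.

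The main obstacle is controlling the first-coordinate residual at the \emph{moving} point $x_2^n$: Lemma~\ref{L5.6} provides only pointwise convergence of $\|u_{k+1}(x_2) - u_k(x_2)\|$ for each fixed $x_2$, whereas I need uniformity in $x_2$ to choose $k_n$ independently of $x_2^n$. Since $\Omega$ is bounded, this uniformity is supplied by the Baillon--Bruck rate estimate $\|u_{k+1}(x_2) - u_k(x_2)\| \le \diam(\Omega)/\sqrt{\pi k \alpha(1-\alpha)}$, whose right-hand side depends only on $\diam(\Omega)$, $k$, and $\alpha$; choosing $k_n$ so large that this quantity is below $(1-\alpha)/n$ then forces the first-coordinate residual below $1/n$, completing the proof.
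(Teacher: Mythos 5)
Your proposal is correct and follows essentially the same route as the paper: your $u_k(x_2)$ is exactly the paper's averaged iterate $P^k_{x_2}(\omega)$, the induction giving nonexpansiveness of $x_2\mapsto u_k(x_2)$ via Lemma~\ref{L2.2} and condition~\eqref{T5.3.2} is the paper's inductive bound $\gamma_n\le\beta$, your $G_k$ is the paper's $H_n$, and the final residual splitting via \eqref{R2.4-3} is identical. The one place you diverge is in fact a point of added care: the paper controls the first-coordinate residual by invoking Lemma~\ref{L5.6} even though the map $T^\Omega_{z_n}$ changes with $n$, which tacitly requires the asymptotic regularity to be uniform over the family of nonexpansive self-maps of the bounded set $\Omega$ (this is what Ishikawa's cited remark supplies), whereas you make that uniformity explicit through the Baillon--Bruck rate $\diam(\Omega)/\sqrt{\pi k\alpha(1-\alpha)}$, so that $k_n$ can be fixed before $x_2^n$ is produced and no circularity arises.
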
	

\begin{proof}
For each $x_2\in X_2$, let $T^\Omega_{x_2}:\Omega\to X_1$ be the restriction of
$T_{x_2}:X_1\to X_1$ given by \eqref{def-Tz}. 
Fix $\al\in(0,1)$ and define  $P_{x_2}:\Omega\to \Omega$  by
\begin{gather}\label{T5.3.1}
P_{x_2}:=\al I+(1-\al)T^\Omega_{x_2},
\end{gather}
where $I$ is the identity map.
Fix $\omega\in \Omega$.
Let $x_2,x_2'\in X_2$ and $\be:=d_2(x_2,x'_2)$.
Then
\begin{eqnarray*}
\|P_{x_2}(\omega)-P_{x_2'}(\omega)\|
&\overset{\eqref{T5.3.1}}{=}&
(1-\alpha)\|T^\Omega_{x_2}(\omega)-T^\Omega_{x_2'}(\omega)\|\\
&\overset{\eqref{def-Tz}}{=}&
(1-\alpha)\|T^\Omega_1(\omega,x_2)-T^\Omega_1(\omega,x_2')\|\\
&\overset{\eqref{R2.4-3}}{\le}&(1-\alpha)d_\psi(T^\Omega(\omega,x_2),T^\Omega(\omega,x_2'))\\
&\le&(1-\alpha)d_\psi((\omega,x_2),(\omega,x_2')) \;\;(T\; \text{is nonexpansive})\\
&\overset{\eqref{T2.2-1}}{=}&
(1-\alpha)\be\\
&\le& \be.
\end{eqnarray*}	
Suppose that 
\begin{gather}\label{T5.3.3}
\|P^n_{x_2}(\omega)-P^n_{x_2'}(\omega)\|\le \be\;\;\text{for some}\;\;n\in\N.
\end{gather}	
Let 
$u_n:=P^n_{x_2}(\omega)$, $u'_n:=P^n_{x_2'}(\omega)$ and
$\gamma_n:=\|u_n-u'_n\|$.
Then
\begin{eqnarray*}
\|P^{n+1}_{x_2}(\omega)-P^{n+1}_{x_2'}(\omega)\|
&=&
\|P^{n+1}_{x_2}(u_n)-P^{n+1}_{x_2'}(u'_n)\|\notag\\
&\overset{\eqref{T5.3.1}}{=}&
\|\alpha(u_n-u'_n)+(1-\alpha)(T^\Omega_{x_2}(u_n)-T^\Omega_{x_2}(u'_n))\|\notag\\
&\le&\alpha\gamma_n +(1-\alpha)\|T^\Omega_{x_2}(u_n)-T^\Omega_{x_2}(u'_n)\| \notag\\
&\overset{\eqref{def-Tz},\eqref{T5.3.3}}{\le}&
\alpha\be+(1-\alpha)\|T^\Omega_1(u_n,x_2)-
T^\Omega_1(u'_n,x_2')\| \notag\\
&\overset{\eqref{T5.3.2}}{\le}& \alpha \be+\dfrac{1-\alpha}{\lambda}d_\psi((u_n,x_2),(u'_n,x_2'))\notag\\
&\overset{\eqref{T2.2-1}}{=}&
\al\be +\dfrac{1-\al}{\lambda}(\gamma_n+\be)\psi\left(\dfrac{\gamma_n}{\gamma_n+\be},\dfrac{\be}{\gamma_n+\be}\right) \notag\\
&\overset{\eqref{L2.3-1},\eqref{T5.3.3}}{\le}&
\al\be +\dfrac{1-\al}{\lambda}\cdot2\be\psi\left(\dfrac{1}{2},\dfrac{1}{2}\right)\\
&=&\al\be+(1-\al)\be=\be.\notag
\end{eqnarray*}	
By induction, we have
\begin{gather}\label{T5.3.4}
\gamma_n\le\be\;\;\text{for all}\;\;n\in\N.
\end{gather}
Let $n\in\N$ and define $H_n: X_2\to X_2$ by
\begin{gather}\label{H}
H_n(z):=T_2(P^n_{z}(\omega),z)\;\;\text{for all}\;\;z\in X_2.	
\end{gather}	 	
Then
\begin{eqnarray*}
d_2(H_n(x_2),H_n(x_2'))
&\overset{\eqref{H}}{=}&
d_2(T_2(u_n,x_2),T_2(u'_n,x_2'))\\
&\overset{\eqref{R2.4-3}}{\le}&
d_\psi(T(u_n,x_2),T(u'_n,x_2'))\\
&\le&
d_\psi((u_n,x_2),(u'_n,x_2'))
\;\;(T\;\text{is nonexpansive})\\
&\overset{\eqref{T2.2-1}}{=}&
(\gamma_n+\be)\psi\left(\dfrac{\gamma_n}{\gamma_n+\be},\dfrac{\be}{\gamma_n+\be}\right)\\
&\overset{\eqref{L2.3-1},\eqref{T5.3.4}}{\le}&
2\psi\left(\dfrac{1}{2},\dfrac{1}{2}\right)\be
=\lambda\be.
\end{eqnarray*}	
Thus, the map $H_n$ is Lipschitz continuous with constant $\lambda$.
By the assumption and Definition~\ref{D5.2}\eqref{D5.2-2}, there exists a sequence $\{z^k\}\subset X_2$ such that $d_2(z^k,H_n(z^k))\to 0$ as $k\to\infty$. 
Thus, there exists a $k_n\in \N$
such that 
\begin{gather}\label{T5.3.11}
b_n(\omega):=d_2(z_n,T_2(P^n_{z_n}(\omega),z_n))\le\frac{1}{n}
\end{gather}
with $z_n:=z^{k_n}$.
Observe that the sequence $\{k_n\}$ can be chosen so that $k_n\to\infty$ as $n\to\infty$.
By Lemma~\ref{L5.6},
\begin{eqnarray}\notag
a_n(\omega)
&:=&\|P^n_{z_n}(\omega)-T^\Omega_1(P^n_{z_n}(\omega),z_n)\|\\ \notag
&\overset{\eqref{def-Tz}}{=}&
\|P^n_{z_n}(\omega)-T^\Omega_{z_n}(P^n_{z_n}(\omega))\|\\
&\overset{\eqref{T5.3.1}}{=}&
\dfrac{1}{1-\al}\|P^n_{z_n}(\omega)-P^{n+1}_{z_n}(\omega)\|\to 0
\;\;\text{as}\;\;n\to\infty.\label{T5.3.12}
\end{eqnarray}	
Let $y_n:=(P^n_{z_n}(\omega),z_n)$ $(n\in\N)$.
By \eqref{R2.4-3}, \eqref{T5.3.11} and \eqref{T5.3.12},
\begin{align*}
d_\psi(y_n,T^\Omega(y_n))
&=d_\psi(y_n,(T^\Omega_1(y_n),T_2(y_n)))\\
&\le \|P^n_{z_n}(\omega)-T^\Omega_1(y_n)\|+d_2(z_n,T_2(y_n))\\
&=a_n(\omega)+b_n(\omega) \to 0\;\;\text{as}\;\;n\to\infty.
\end{align*}	
By Definition~\ref{D5.2}\eqref{D5.2-2}, $\{y_n\}$ is an approximate fixed point sequence of $T^\Omega$.
\end{proof}	

\begin{remark}	
When $\psi:=\psi_\infty$ is given by \eqref{R5.5-1}, we have $\lambda=1$, and consequently, condition~\eqref{T5.3.2} can be omitted since it is  a direct consequence of the nonexpansiveness of $T$. 
In this case, Theorem~\ref{T5.7} recaptures \cite[Theorem~4.2]{EspKir01}.
\end{remark}	

The following example illustrates Theorem~\ref{T5.7}.
\begin{example}
Let $X_1:=\R^2$ and $X_2:=[-1,1]$ be equipped with the  Euclidean norm $\|\cdot\|$ and  the standard metric $|\cdot|$,
$\Omega:=\{\omega\in\R^2\mid \|\omega\|\leq 1\}$,  $\textbf{e}_1:=(1,0)\in \Omega$, 
$X_{\Omega}:=\Omega\times X_2$ be equipped with the metric $d_{\psi_p}$  where
$\psi_p$ is given by \eqref{ppsi} for some $p\in[1,\infty)$, and $\lambda:=2\psi_p\left(\frac{1}{2},\frac{1}{2}\right)=2^{1/p}.$
Define $T^\Omega:X_{\Omega}\to X_{\Omega}$ for any $x:=(x_1,x_2)\in X_\Omega$ by 
\begin{equation}\label{E5.1}
T^\Omega(x):=(T_1^\Omega(x),T_2(x)):=\left(\frac{1}{2^{1+1/p}}\cdot x_1+\frac{1}{2^{1+1/p}} x_2\cdot\textbf{e}_1,\frac{1}{2}\langle x_1,\textbf{e}_1\rangle+\frac{1}{2}x_2\right).
\end{equation}	
For any $(x_1,x_2)\in X_\Omega$, we have
\begin{gather*}
\|T^\Omega_1(x_1,x_2)\|\le\dfrac{\|x_1\|}{2^{1+1/p}}+\dfrac{|x_2|}{2^{1+1/p}}\le\dfrac{2}{2^{1+1/p}}<1,\\
|T_2(x_1,x_2)|=\abs{\frac{1}{2}\langle x_1,\textbf{e}_1\rangle+\frac{1}{2}x_2}\le\dfrac{1}{2}+\dfrac{1}{2}=1.
\end{gather*}	
Thus, $T^{\Omega}$ is well-defined.
Let $x:=(x_1,x_2),x':=(x'_1,x'_2)\in X_\Omega$, $u_1:=x_1-x'_1$ and $u_2:=x_2-x'_2$.
Then
\begin{eqnarray*}
d_{\psi_p}(T^\Omega(x), T^\Omega(x'))
&\overset{\eqref{E5.1}}{=}&\left(\left\|\frac{1}{2^{1+1/p}}\cdot u_1+\frac{1}{2^{1+1/p}}u_2\cdot\textbf{e}_1\right\|^p+\left|\frac{1}{2}\langle u_1,\textbf{e}_1\rangle+\frac{1}{2}u_2\right|^p\right)^{\frac{1}{p}}\\
&\leq&\left[\left(\frac{1}{2}\|u_1\|+\frac{1}{2}|u_2|\right)^p+\left(\frac{1}{2}\|u_1\|+\frac{1}{2}|u_2|\right)^p\right]^{\frac{1}{p}}\\
&\overset{\eqref{E5.6-2}}{\le}&\left[2^{p}\left(\frac{1}{2^p}\|u_1\|^p+\frac{1}{2^p}|u_2|^p\right)\right]^{\frac{1}{p}}\\
&=&\left(\|u_1\|^p+|u_2|^p\right)^{\frac{1}{p}}
=d_{\psi_p}(x,x'),
\end{eqnarray*}
and consequently, $T^\Omega$ is nonexpansive.	
If $\|u_1\|\leq |u_2|$, then
\begin{eqnarray*}
\|T_1^\Omega(x)-T_1^\Omega(x')\|
&\overset{\eqref{E5.1}}{=}&\left\|\frac{1}{2^{1+1/p}}\cdot u_1+\frac{1}{2^{1+1/p}} u_2\cdot\textbf{e}_1\right\|\\
&\leq&\frac{1}{2^{1+1/p}}\|u_1\|+\frac{1}{2^{1+1/p}}|u_2|\\
&\leq& \frac{1}{2^{1/p}}|u_2|
\overset{\eqref{R2.4-3}}{\le}\frac{1}{2^{1/p}}\cdot d_{\psi_p}(x,x')
=\dfrac{1}{\lambda}\cdot d_{\psi_p}(x,x').
\end{eqnarray*}	
Thus, condition \eqref{T5.3.2}  is satisfied. 
It is clear that $(X_1,\|\cdot\|)$ is a Banach space, and $\Omega$ be a nonempty bounded closed convex subset of $X_1$. 
On the other hand, $(X_2,|\cdot|)$ has the fixed point property for continuous self-maps due to the intermediate value theorem, and consequently, $(X_2,|\cdot|)$ 
has the approximate fixed point property for Lipschitz continuous maps.
By Theorem~\ref{T5.7}, $T^\Omega$ admits an approximate fixed point sequence. 
Observe that the sequence $x^n:=(\frac{1}{n}\textbf{e}_1, -\frac{1}{n})\in X_\Omega$  $(n\in\N)$ is an approximate fixed point sequence of $T^\Omega$.
Indeed, for each $n\in \N$,  we have
$T_1^\Omega(x^n)=T_2(x^n)=0$, and consequently,
\begin{eqnarray*}
d_{\psi_p}(x^n,T^\Omega(x^n))
&=&d_{\psi_p}(x^n,(T^\Omega_1(x^n),T_2(x^n))\\
&\overset{\eqref{pmetric}}{=}&\left(\left\|\frac{1}{n}\textbf{e}_1-T_1^\Omega(x^n)\right\|^p+\left|-\dfrac{1}{n}-T_2(x^n)\right|^p\right)^{\frac{1}{p}}\\
&=&\left(\left\|\frac{1}{n}\textbf{e}_1\right\|^p+\left|\frac{1}{n}\right|^p\right)^{\frac{1}{p}}
=\frac{2^{\frac{1}{p}}}{n}\longrightarrow 0\quad\text{as } n\to\infty.
\end{eqnarray*}
\end{example}

\section{Product Constructions of Length Spaces}\label{S6}

The following definition recalls the concept of a length of a curve \cite[Definition~1.18]{BriHae99}.
\begin{definition}\label{D3.1}
Let $(X,d)$ be a metric space, and $a,b\in\R$ with $a\le b$. 
The length of a curve  is defined by
\begin{gather}\label{D3.1-1}
L(\sigma):=\sup_{a=t_0<t_1<\ldots< t_n=b}\sum_{i=1}^{n}d(\sigma(t_{i-1}),\sigma(t_i)),
\end{gather}
where the supremum is taken over the set of all partitions of $[a,b]$ with the convention that the supremum over the empty set is equal to $0$.
\end{definition}	

\begin{remark}\label{R3.2}
\begin{enumerate}
\item\label{R3.2-0}
The length of a curve is invariant under reparameterization \cite[Proposition~1.1.8]{Pap14}.
\item\label{R3.2-1}
If $L(\sigma)<\infty$, then the curve $\sigma$ is called \textit{rectifiable}.
Examples of non-rectifiable curves can be found in 
\cite[Example~1.19]{BriHae99} and
\cite[Examples~1.1.6 \& 1.1.11]{Pap14}.
\item\label{R3.2-2}
For a pair $u,v\in X$, we denote ${\rm \textbf{R}}^X_{[a,b]}(u,v)$  the set of all rectifiable curves $\sigma:[a,b]\to X$ joining $u$ and $v$, i.e., $L(\sigma)<\infty$,
$\sigma(a)=u$ and $\sigma(b)=v$.
By Definition~\ref{D3.1},
\begin{gather}\label{R3.2-3}
d(u,v)\le \inf_{\sigma\in {\rm \textbf{\rm \textbf{R}}}^{X}_{[a,b]}(u,v)}  L(\sigma):=\mathbf{d}(u,v)\;\;\text{for all}\;\;u,v\in X,
\end{gather}	
with the convention that the infimum over the empty set equals $\infty$.
It is worth to mention that $\textbf{d}$ is  a metric on $X$; see \cite[Proposition~3.2]{BriHae99} and \cite[Proposition~2.1.5]{Pap14}.
\item\label{R3.2-4}
By \cite[Remark 1.22]{BriHae99}, for any rectifiable curve $\gamma:[a,b]\to X$, there exists a rectifiable
curve $\sigma:[0,1]\to X$ such that $L(\sigma)=L(\gamma)$, and
\begin{gather}\label{R3.2-5}
L(\sigma|_{[0,t]})=t\cdot L(\sigma)\;\;\text{for all}\;\;
t\in [0,1],\\ \label{R3.2-6}
L(\sigma)=\sup_{m\in \N}\sum_{j=1}^{m}d\left(\sigma\left(\dfrac{j-1}{m}\right),\sigma\left(\dfrac{j}{m}\right)\right).
\end{gather}	
A rectifiable curve satisfying condition \eqref{R3.2-5} is called \textit{parameterized proportional to arc length} \cite[Definition~1.21]{BriHae99}.
\end{enumerate}
\end{remark}

The next definition recalls the notion of a length space; see \cite[Definition~3.1]{BriHae99} and \cite[Definition~2.1.2]{Pap14}.
\begin{definition}\label{D4.1}
A metric space $(X,d)$ is called a {length space} if the distance between any pair of points equals the greatest lower bound of the lengths of rectifiable curves joining them, i.e.,
\begin{gather*}
d(u,v)=\inf_{\sigma\in {\rm \textbf{R}}^X_{[a,b]}(u,v)} L(\sigma)\;\;\text{for all}\;\;u,v\in X.
\end{gather*}
\end{definition}

\begin{remark}
If $(X,d)$ is a length space, then ${\rm \textbf{R}}^X_{[a,b]}(u,v)$ is nonempty  for all $u,v\in X$.
\end{remark}	

The following theorem establishes a necessary and sufficient condition for a product space to be a length space.
\begin{theorem}\label{T3.5}
Let $(X_i,d_i)$ $(i=1,\ldots,n)$ be metric spaces, $M:=X_1\times\cdots\times X_n$, and $\psi\in\pmb{\Psi}_n$. 
Then $(X_i,d_i)$ $(i=1,\ldots,n)$ are length spaces if and only if $(M,d_\psi)$ is a length space.
\end{theorem}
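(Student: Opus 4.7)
The plan is to prove the two directions of the equivalence separately, with the forward implication being the substantive one.

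For the reverse direction, assume $(M,d_\psi)$ is a length space. Fix an index $i$ and points $u_i,v_i\in X_i$. Choose arbitrary auxiliary points $x_j\in X_j$ for $j\ne i$ and form $u,v\in M$ that differ only in the $i$-th coordinate. As in the converse part of the proof of Theorem~\ref{T3.10}, the construction \eqref{T2.2-1} together with $\psi(\mathbf{e}_i)=1$ gives $d_\psi(u,v)=d_i(u_i,v_i)$. Given $\eps>0$, pick a rectifiable curve $\sigma\in{\rm\textbf{R}}^M_{[0,1]}(u,v)$ with $L(\sigma)<d_\psi(u,v)+\eps$. The $i$-th component $\sigma_i$ of $\sigma$ is continuous into $X_i$ by the left inequality in \eqref{R2.4-3} and joins $u_i$ to $v_i$; that same inequality bounds each term of any partition sum for $\sigma_i$ by the corresponding term for $\sigma$, so $L(\sigma_i)\le L(\sigma)<d_i(u_i,v_i)+\eps$. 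Letting $\eps\to 0$ and combining with \eqref{R3.2-3} yields Definition~\ref{D4.1} for $(X_i,d_i)$.

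For the forward direction, assume each $(X_i,d_i)$ is a length space. Given $u,v\in M$ and $\eps>0$ with $u\ne v$, I would construct a product curve as follows. For each $i$ with $u_i\ne v_i$, use Definition~\ref{D4.1} to pick a rectifiable curve in $X_i$ from $u_i$ to $v_i$ with length close to $d_i(u_i,v_i)$, and by Remark~\ref{R3.2}\eqref{R3.2-4} reparameterize it proportional to arc length on $[0,1]$; for $i$ with $u_i=v_i$, take $\sigma_i$ to be the constant curve. Set $\sigma(t):=(\sigma_1(t),\ldots,\sigma_n(t))$, which is continuous and joins $u$ to $v$ in $M$. The crucial step is the estimate
$$L(\sigma)\le\left(\sum_{i=1}^n L(\sigma_i)\right)\psi\left(\frac{L(\sigma_1)}{\sum_k L(\sigma_k)},\ldots,\frac{L(\sigma_n)}{\sum_k L(\sigma_k)}\right).$$
To establish it, for any partition $0=t_0<\cdots<t_m=1$, apply Lemma~\ref{L2.2} on each subinterval with $\al_i:=d_i(\sigma_i(t_{j-1}),\sigma_i(t_j))$ and $\be_i:=(t_j-t_{j-1})L(\sigma_i)$; the parameterization property gives $\al_i\le\be_i$, and the monotonicity of Lemma~\ref{L2.2} bounds each $d_\psi(\sigma(t_{j-1}),\sigma(t_j))$ by $(t_j-t_{j-1})$ times the right-hand side of the display. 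Summing over $j$ and using $\sum_j(t_j-t_{j-1})=1$ yields the claimed bound. Since its right-hand side has exactly the functional form of \eqref{T2.2-1}, continuity of $\psi$ on the compact simplex $\Omega_n$ lets each $L(\sigma_i)$ tend to $d_i(u_i,v_i)$, making the bound arbitrarily close to $d_\psi(u,v)$. Combined with \eqref{R3.2-3}, this establishes Definition~\ref{D4.1} for $(M,d_\psi)$.

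The hard part will be the length estimate above. Its proof depends on Lemma~\ref{L2.2} being applied subinterval by subinterval, so that the partition-dependent ratios $\al_i/\al$ get replaced by the partition-free ratios $L(\sigma_i)/\sum_k L(\sigma_k)$ before summation; this is precisely what enables the factor $\sum_j(t_j-t_{j-1})=1$ to telescope away. Two technical points require care: the strict positivity hypothesis $\al_i>0$ in Lemma~\ref{L2.2} must be relaxed by continuity of $\psi$ to handle subintervals on which some $\al_i=0$ (in particular when $\sigma_i$ is constant), and the limit $L(\sigma_i)\to d_i(u_i,v_i)$ when some $d_i(u_i,v_i)=0$ has to be taken using continuity of $\psi$ on all of $\Omega_n$, not merely on its relative interior.
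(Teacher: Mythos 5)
Your proposal is correct and follows essentially the same route as the paper's proof: both directions hinge on the same ingredients, namely the coordinate-embedding identity $d_\psi(u,v)=d_i(u_i,v_i)$ with \eqref{R2.4-3} for the reverse implication, and, for the forward implication, curves parameterized proportional to arc length combined with Lemma~\ref{L2.2} to replace the partition-dependent ratios by $L(\sigma_i)/\sum_k L(\sigma_k)$, followed by continuity of $\psi$ on $\Omega_n$. The only cosmetic difference is that you run the estimate over arbitrary partitions using additivity of length on subintervals, whereas the paper restricts to uniform partitions and invokes \eqref{R3.2-6}; the technical caveats you flag (degenerate coordinates where $\al_i=0$, and continuity of $\psi$ up to the boundary of the simplex) are exactly the points the paper's explicit $\tau$--$\delta$ bookkeeping is designed to handle.
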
	

\begin{proof}
Suppose that $(X_i,d_i)$ $(i=1,\ldots,n)$ are length spaces.		
By Theorem~\ref{T3.10}, $(M,d_\psi)$ is a metric space.
Let $x:=(x_1,\ldots,x_n),y:=(y_1,\ldots,y_n)\in M$. 
Let $\varepsilon>0$, $\al_i:=d_i(x_i,y_i)$ $(i=1,\ldots,n)$ and $\al:=\sum_{i=1}^{n}\al_i$. 
Choose a $\be>0$ such that $\be<\frac{\varepsilon}{\al}$.
Since $\psi$ is continuous on the compact set $\Omega_n$, it is uniformly continuous on $\Omega_n$ with respect to the standard maximum norm on $\R^n$.
Thus, there exists a $\tau>0$ such that
\begin{gather}\label{T4.8-25}
|\psi(t)-\psi(t')|<\be\;\;\text{if}\;\;
|t_i-t'_i|<\tau\;\;(i=1,\ldots,n)
\end{gather}	
for all $t:=(t_1,\ldots,t_n),t':=(t'_1,\ldots,t'_n)\in\Omega_n$.
Choose a $\de>0$ such that
\begin{gather}\label{T4.8-24}
\delta<\min\left\{ \frac{\epsilon-\al\beta}{(\be+1)n},\dfrac{\al^2\tau}{\al+(n-2)\cdot\max\{\al_1,\ldots,\al_n\}}\right\}.
\end{gather} 
By the length space property and Remark~\ref{R3.2}\eqref{R3.2-4},
there exist curves $\sigma_i\in \textbf{R}^{X_i}_{[0,1]}(x_i,y_i)$ such that
\begin{gather}\label{T4.8-30}
L(\sigma_i)<\al_i+\delta, \\ \label{T4.8-21}
L(\sigma_i|_{[0,t]})=t\cdot L(\sigma_i)\;\;\text{for all}\;\;t\in[0,1]
\end{gather}
for $i=1,\ldots,n$.
Let $\sigma:=(\sigma_1,\ldots,\sigma_n)$ and $L:=\sum_{i=1}^nL(\sigma_i)$.
Then $\sigma\in \textbf{R}^{M}_{[0,1]}(x,y)$.
By \eqref{R3.2-3} and \eqref{T4.8-21},
\begin{gather}\label{T4.8-27}
\al\le L<\al +n\delta\;\;\text{and}\;\;
L(\sigma)\ge d_\psi(x,y).
\end{gather}	
Let $m\in\N$.
For each $i=1,\ldots,n$, we have 
\begin{gather}
\lambda_{ij}:=d_i\left(\sigma_i\left(\frac{j-1}{m}\right)\sigma_i\left(\frac{j}{m}\right)\right)\overset{\eqref{R3.2-3}}{\le} L\left({\sigma_i}_{|_{\left[\frac{j-1}{m},\frac{j}{m}\right]}}\right)
\overset{\eqref{T4.8-21}}{=}
\frac{L(\sigma_i)}{m} \label{T4.8-3}
\end{gather}	
for all $j=1,\ldots,m$.
By  \eqref{T4.8-30}, 
\begin{align}\notag
\sum_{j=1}^n |\al_j\cdot L(\sigma_i)-\al_i\cdot L(\sigma_j)|
&\le \sum_{j\ne i}\left( \al_j\cdot|L(\sigma_i)-\al_i|+\al_i\cdot|\al_j-L(\sigma_j)|\right)\\ 
&<\de\cdot\sum_{j\ne i}(\al_j+\al_i)=\de\left(\al+(n-2)\al_i\right) \label{T4.8-22}
\end{align}	
for all $i=1,\ldots,n$.
By \eqref{T4.8-24} and \eqref{T4.8-22},
\begin{align}\notag
\left|\frac{L(\sigma_i)}{L}-\dfrac{\al_i}{\al}\right|
&\le \dfrac{\sum_{j=1}^n|\al_j\cdot L(\sigma_i)-\al_i\cdot L(\sigma_j)|}{L\cdot\al}\\
&<\delta\cdot\dfrac{\al+(n-2)\al_i}{\al^2}<\tau
\;\;(i=1,\ldots,n).\label{T4.8-26}
\end{align}	
By \eqref{T4.8-25} and \eqref{T4.8-26},
\begin{gather}\label{T4.8-23}
\psi\left(\frac{L(\sigma_1)}{L},\ldots, \frac{L(\sigma_n)}{L}\right)<\psi\left(\frac{\al_1}{\al},\ldots, \frac{\al_n}{\al}\right)+\be.
\end{gather}	
For $j=1,\ldots,m$, 
\begin{eqnarray}\notag
d_\psi\left(\sigma\left(\frac{j-1}{m}\right),\sigma\left(\frac{j}{m}\right)\right)
&\overset{\eqref{T2.2-1}}{=}&\left(\sum_{i=1}^n\lambda_{ij}\right)\cdot
\psi\left(\frac{\lambda_{1j}}{\sum_{i=1}^n\lambda_{ij}},\ldots, \frac{\lambda_{nj}}{\sum_{i=1}^n\lambda_{ij}}\right)\\ 
&\overset{\eqref{L2.3-1},\eqref{T4.8-3} }{\le}&\dfrac{L}{m}\cdot
\psi\left(\frac{L(\sigma_1)}{L},\ldots, \frac{L(\sigma_n)}{L}\right). \label{T4.8-8}
\end{eqnarray}	
Then
\begin{eqnarray*}
L(\sigma)
&\overset{\eqref{R3.2-6}}{=}&\sup_{m\in\N}\sum_{j=1}^{m}d_\psi\left(\sigma\left(\frac{j-1}{m}\right),\sigma\left(\frac{j}{m}\right)\right)\\
&\overset{\eqref{T2.2-1},\eqref{T4.8-8}}{\le}& L\cdot
	\psi\left(\frac{L(\sigma_1)}{L},\ldots, \frac{L(\sigma_n)}{L}\right)\\
&\overset{\eqref{T4.8-27}, \eqref{T4.8-23}}{<}&
(\al+n\delta)\cdot\left[
	\psi\left(\frac{\al_1}{\al},\ldots, \frac{\al_n}{\al}\right)+\be\right]\\
&=&\al\cdot\psi\left(\frac{\al_1}{\al},\ldots, \frac{\al_n}{\al}\right)+\beta(\al+n\delta)+n\delta\cdot\psi\left(\frac{\al_1}{\al},\ldots, \frac{\al_n}{\al}\right)\\
	&\overset{\eqref{R2.1-4},\eqref{T2.2-1}}{\le}& d_\psi(x,y)+\beta(\al+n\delta)+n\delta\\
&\overset{\eqref{T4.8-24}}{<}&d_\psi(x,y)+\varepsilon.
\end{eqnarray*}
Thus, 
\begin{gather*}
d_\psi(x, y)=\inf_{\sigma\in {\rm \textbf{R}}^{M}_{[0,1]}(x,y)}L(\sigma).
\end{gather*} 
By Definition~\ref{D4.1}, $(M,d_\psi)$ is a length space. 

Suppose that $(M,d_\psi)$ is a length space.
Let $i=1,\ldots,n$ and $x_{i}, y_{i}\in X_i$.
Define $x,y\in M$ by \eqref{T4.8-9} for some $x_j\in X_j$ ($j\ne i$).
Let $\varepsilon>0$.
By \eqref{T2.2-1} and \eqref{T4.8-9}, 
\begin{gather}\label{T4.8-4}
	d_\psi(x,y)=d_i(x_i,y_i)\cdot\psi\left(\mathbf{e}_i\right)=d_i(x_i,y_i).
\end{gather}
By Definition~\ref{D4.1}, there exists a curve $\sigma:=(\sigma_{1},\ldots,\sigma_{n})\in {\rm \textbf{R}}^{M}_{[a,b]}(x, y)$ such that
\begin{gather}\label{T4.8-5}
	d_\psi(x,y)+\epsilon>L(\sigma).
\end{gather}	
Then
\begin{eqnarray*}
d_i(x_i,y_i)+\epsilon
&\overset{\eqref{T4.8-4}}{=}&d_\psi(x,y)+\epsilon\\
&\overset{\eqref{T4.8-5}}{>}&	L(\sigma)\\
&\overset{\eqref{D3.1-1}}{=}&\sup_{a=t_0<t_1<\ldots< t_n=b}\sum_{i=1}^{n}d_\psi(\sigma(t_{i-1}),\sigma(t_i))\\ 
&\overset{\eqref{R2.4-3}}{\ge}& \sup_{a=t_0<t_1<\ldots< t_n=b}\sum_{i=1}^{n}d_i(\sigma_i(t_{i-1}),\sigma_i(t_i))
=L(\sigma_i).
\end{eqnarray*}	
Observe that $\sigma_i\in {\rm \textbf{R}}^{X_i}_{[a,b]}(x_i,y_i)$. 
Thus, 
\begin{gather*}
d_i(x_{i}, y_{i})=\inf_{\gamma \in {\rm \textbf{R}}^{X_i}_{[a,b]}(x_i,y_i)}L(\gamma).
\end{gather*} 
By Definition~\ref{D4.1}, $(X_i,d_i)$ is a length space. 
\end{proof}	

\begin{remark}
When $\psi:=\psi_p$ is defined by 
\eqref{ppsi} with  $p\in[1,\infty]$,  Theorem~\ref{T3.5} recaptures \cite[Proposition~5.3(1) \& Exercises~5.5(1)]{BriHae99}.
\end{remark}	

The following statement recalls a characterization of a length space; cf. \cite[p.32]{BriHae99} and \cite[Theorem~3.4]{Iof17}.

\begin{lemma}\label{L3.6}
A complete metric space $(X,d)$ is a length space if and only if for all $x, y\in X$ and $\epsilon>0$, there exists a  $\bx\in X$ such that
\begin{gather}\label{appmidpoint}
\max\{d(x,\bx), d(y,\bx)\}\leq\frac{1}{2}d(x,y)+\epsilon.
\end{gather}
\end{lemma}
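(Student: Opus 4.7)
The plan is to treat the two implications separately, noting that the nontrivial direction (sufficiency) is the one that uses completeness. For necessity, given $x,y\in X$ and $\epsilon>0$, I would apply Definition~\ref{D4.1} to pick $\gamma\in\mathbf{R}^X_{[a,b]}(x,y)$ with $L(\gamma)<d(x,y)+2\epsilon$, and then use Remark~\ref{R3.2}\eqref{R3.2-4} to replace $\gamma$ by a curve $\sigma:[0,1]\to X$ of the same length parameterized proportional to arc length, so $\sigma(0)=x$, $\sigma(1)=y$ and $L(\sigma|_{[0,t]})=tL(\sigma)$ for every $t\in[0,1]$. Setting $\bx:=\sigma(1/2)$ and applying \eqref{R3.2-3} to $\sigma|_{[0,1/2]}$ and $\sigma|_{[1/2,1]}$ gives $d(x,\bx),d(y,\bx)\le L(\sigma)/2<d(x,y)/2+\epsilon$, which is~\eqref{appmidpoint}.

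For sufficiency, fix $x,y\in X$ and $\eta>0$; the aim is to produce $\sigma\in\mathbf{R}^X_{[0,1]}(x,y)$ with $L(\sigma)\le d(x,y)+\eta$. I would choose positive numbers $\varepsilon_n$ with $\sum_{n\ge 1} 2^n\varepsilon_n<\eta$ and define $\sigma$ on the dyadic set $D:=\bigcup_n\{k/2^n:0\le k\le 2^n\}$ by iterated approximate midpointing: start with $\sigma(0)=x$ and $\sigma(1)=y$, and having constructed $\sigma$ at level $n$, choose $\sigma((2k+1)/2^{n+1})$ as an $\varepsilon_{n+1}$-approximate midpoint of $\sigma(k/2^n)$ and $\sigma((k+1)/2^n)$ per~\eqref{appmidpoint}. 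With $M_n:=\max_k d(\sigma(k/2^n),\sigma((k+1)/2^n))$ and $S_n:=\sum_k d(\sigma(k/2^n),\sigma((k+1)/2^n))$, the midpoint inequality yields the recursions $M_{n+1}\le M_n/2+\varepsilon_{n+1}$ and $S_{n+1}\le S_n+2^{n+1}\varepsilon_{n+1}$; inducting from $M_0=S_0=d(x,y)$ then gives $M_n\to 0$ and $S_n\le d(x,y)+\eta$ for every $n$.

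The next step is to verify uniform continuity of $\sigma$ on $D$: for $s,t\in D$ lying in a common level-$n$ subinterval, $d(\sigma(s),\sigma(t))$ is controlled by a tail of the form $M_n+\sum_{j>n}(M_j/2+\varepsilon_{j+1})$, which vanishes as $n\to\infty$. By completeness of $X$ and density of $D$ in $[0,1]$, $\sigma$ then extends uniquely to a continuous curve $\sigma:[0,1]\to X$ with $\sigma(0)=x$ and $\sigma(1)=y$. To bound $L(\sigma)$, I would approximate any partition of $[0,1]$ by a dyadic partition at sufficiently fine level (using continuity of $\sigma$) to get $\sum_i d(\sigma(t_{i-1}),\sigma(t_i))\le S_n+o(1)\le d(x,y)+\eta$, so $L(\sigma)\le d(x,y)+\eta$. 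Combined with~\eqref{R3.2-3} and sending $\eta\to 0$, this yields the length-space property in the sense of Definition~\ref{D4.1}.

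I expect the main obstacle to be the uniform-continuity step on $D$, since one must simultaneously control (a) the geometric decay of the maximum edge length $M_n$, (b) the accumulation of approximate-midpoint errors $\varepsilon_n$ along chains crossing several levels of the dyadic subdivision, and (c) the fact that two nearby dyadic rationals may sit inside very fine and possibly distinct subintervals, requiring a careful chaining argument across levels. A secondary technical point is transferring the dyadic bound $S_n\le d(x,y)+\eta$ to a bound on $L(\sigma)$ over arbitrary partitions via the already-established continuity of $\sigma$.
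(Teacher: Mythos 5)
Your argument is correct and is essentially the standard dyadic approximate-midpoint construction that the paper relies on by citation (it gives no proof of Lemma~\ref{L3.6}, referring instead to \cite[p.~32]{BriHae99} and \cite[Theorem~3.4]{Iof17}, where exactly this argument appears). Both directions check out: the recursions $M_{n+1}\le M_n/2+\varepsilon_{n+1}$ and $S_{n+1}\le S_n+2^{n+1}\varepsilon_{n+1}$ with $\sum_n 2^n\varepsilon_n<\eta$ give $M_n\le (d(x,y)+\eta)2^{-n}$ and $S_n\le d(x,y)+\eta$, which is enough for the chaining, the continuous extension by completeness, and the passage from dyadic to arbitrary partitions.
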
	

The proof of Theorem~\ref{T3.5} uses the  definition of a length space (Definition~\ref{D4.1}).
When $(X_i,d_i)$ $(i=1,\ldots,n)$ are complete metric spaces, an alternative proof can be provided by employing Lemma~\ref{L3.6}.

\begin{proof}[Proof of Theorem~\ref{T3.5} under the completeness assumption]
By Remark~\ref{R2.4}\eqref{R2.4-2} and Theorem~\ref{T3.10},  $(M,d_\psi)$ is a complete metric space.
\sloppy
	
Suppose that $(X_i,d_i)$ $(i=1,\ldots,n)$ are length spaces.		
Let $x:=(x_1,\ldots,x_n), y:=(y_1,\ldots,y_n)\in M$,
$\varepsilon>0$, $\al_i:=d_i(x_i,y_i)$ $(i=1,\ldots,n)$ and $\al:=\sum_{i=1}^{n}\al_i$. 
Choose a $\be>0$ such that $\be<\frac{2\varepsilon}{\al}$.
Then there exists a $\tau>0$ such that condition \eqref{T4.8-25} is satisfied for all $t:=(t_1,\ldots,t_n)$, $t':=(t'_1,\ldots,t'_n)\in\Omega_n$.
Choose a  $\de>0$ such that
\begin{gather}\label{T4.8-16}
\delta<\min\left\{\dfrac{\al}{4n},\dfrac{\al^2\tau}{4\left(\al+(n-2)\cdot\max\{\al_1,\ldots,\al_n\}\right)},\dfrac{2\epsilon-\alpha\beta}{2(\beta+1)n}\right\}.
\end{gather} 
By Lemma~\ref{L3.6}, there exist points $\bx_i\in X_i$ 
$(i=1,\ldots,n)$ such that
\begin{gather}\label{T4.8-12}
\max\{d_i(x_i,\bx_i),d_i(y_i,\bx_i)\}\le \dfrac{\al_i}{2}+\delta\;\;
(i=1,\ldots,n).	
\end{gather}	
Let $\bx:=(\bx_1,\ldots,\bx_n)$, $\theta_i:=d_i(x_i,\bx_i)$ $(i=1,\ldots,n)$, and $\theta:=\sum_{i=1}^{n}\theta_i$.
We are going to show that
\begin{gather}\label{T4.8-14}
	d_\psi(x,\bx)<\dfrac{1}{2}d_\psi(x,y)+\varepsilon.
\end{gather}	
By \eqref{T4.8-12} and the triangle inequality,
\begin{gather}\label{T4.8-40}
\frac{\al_i}{2}+\delta\ge \theta_i\ge \al_i-d_i(y_i,\bx_i)\ge\frac{\al_i}{2}-\delta\;\;(i=1,\ldots,n).
\end{gather}	
By \eqref{T4.8-16} and \eqref{T4.8-40},
\begin{gather}\label{T4.8-13}
\left|\theta_i-\dfrac{\al_i}{2}\right|\le \delta\;\;(i=1,\ldots,n)\;\;\text{and}\;\;
\dfrac{\al}{2}+n\delta\ge \theta \ge \dfrac{\al}{2}-n\delta>\dfrac{\al}{4}.
\end{gather}	
By \eqref{T4.8-13}, 
\begin{align}\notag
\sum_{j=1}^n |\al_j\cdot \theta_i-\al_i\cdot \theta_j|
&\le \sum_{j\ne i}\left( \al_j\cdot\left|\theta_i-\frac{\al_i}{2}\right|+\al_i\cdot\left|\frac{\al_j}{2}-\theta_j\right|\right)\\ \label{T4.8-17}
&\le\de\cdot\sum_{j\ne i}(\al_j+\al_i)=\de\left(\al+(n-2)\al_i\right)\;\;(i=1,\ldots,n).
\end{align}	
By \eqref{T4.8-16}, \eqref{T4.8-13} and \eqref{T4.8-17},
\begin{align}\label{T4.8-41}
\left|\frac{\theta_i}{\theta}-\dfrac{\al_i}{\al}\right|
\le \dfrac{\sum_{j=1}^n|\al_j\cdot \theta_i-\al_i\cdot\theta_j|}{\theta\cdot\al}
<\delta\cdot\dfrac{4\left(\al+(n-2)\al_i\right)}{\al^2}<\tau\;\;(i=1,\ldots,n).
\end{align}	
By \eqref{T4.8-25} and \eqref{T4.8-41},
\begin{gather}\label{T4.8-19}
\psi\left(\dfrac{\theta_1}{\theta},\ldots,\dfrac{\theta_n}{\theta}\right)<\psi\left(\dfrac{\al_1}{\al},\ldots,\dfrac{\al_n}{\al}\right)+\be.
\end{gather}	
Then
\begin{eqnarray*}
d_\psi(x,\bx)
&\overset{\eqref{T2.2-1}}{=}&\theta\cdot\psi\left(\dfrac{\theta_1}{\theta},\ldots,\dfrac{\theta_n}{\theta}\right)\\
&\overset{\eqref{T4.8-13},\eqref{T4.8-19}}{<}& \left(\dfrac{\al}{2}+n\delta\right)\cdot\left[\psi\left(\dfrac{\al_1}{\al},\ldots,\dfrac{\al_n}{\al}\right)+\be\right]\\
&=& \dfrac{1}{2}\al\cdot\psi\left(\dfrac{\al_1}{\al},\ldots,\dfrac{\al_n}{\al}\right)+\be\left(\dfrac{\al}{2}+n\delta\right)+n\delta\cdot\psi\left(\frac{\al_1}{\al},\ldots, \frac{\al_n}{\al}\right)\\
&\overset{\eqref{R2.1-4},\eqref{T2.2-1}}{\le}& \dfrac{1}{2}d_\psi(x,y)+\be\left(\dfrac{\al}{2}+n\delta\right)+n\delta\\&\overset{\eqref{T4.8-16}}{<}& \dfrac{1}{2}d_\psi(x,y)+\varepsilon.
\end{eqnarray*}	
Thus, condition \eqref{T4.8-14} holds true.
Using a similar argument, we obtain 
$d_\psi(y,\bx)<\frac{1}{2}d_\psi(x,y)+\varepsilon.$
Thus, 
\begin{align}\label{T4.8-10}
\max\{d_\psi(x,\bx), d_\psi(y,\bx)\}\leq\frac{1}{2}d_\psi(x,y)+\epsilon.
\end{align}
By Lemma~\ref{L3.6}, $(M,d_\psi)$ is a length space.

Suppose that $(M,d_\psi)$ is a length space.
Let $i=1,\ldots,n$ and  $x_{i}, y_{i}\in X_i$.
Define $x,y\in M$ by \eqref{T4.8-9} for some
$x_j\in X_j$ ($j\ne i$).
Then condition \eqref{T4.8-4} holds true.
Let $\varepsilon>0$.
By Lemma~\ref{L3.6}, there exists a $\bx:=(\bx_1,\ldots,\bx_n)\in M$ such that condition \eqref{T4.8-10} is satisfied.
By \eqref{R2.4-3}, 
\begin{gather}\label{T4.8-11}
d_i(x_i,\bx_i)\le d_{\psi}(x,\bx),\;\;d_i(y_i,\bx_i)\le d_{\psi}(y,\bx).
\end{gather}	
By \eqref{T4.8-4}, \eqref{T4.8-10} and \eqref{T4.8-11},
\begin{align*}
\max\{d_i(x_i,\bx_i), d_i(y_i,\bx_i)\}\leq\frac{1}{2}d_i(x_i,y_i)+\epsilon.
\end{align*}
By Lemma~\ref{L3.6}, $(X_i,d_i)$ is a length space.
\end{proof}

\begin{remark}
The completeness assumption is not essential for Theorem~\ref{T3.5}. Nevertheless, we include the above proof to illustrate that the class of continuous convex functions provides a convenient setting for handling the technicalities involved in the product construction of length spaces.
\end{remark}

\section{Product Constructions of Geodesic Spaces}\label{S7}
The following definition recalls the notion of a geodesic space; cf. \cite[Definition~1.3]{BriHae99} and \cite[Definition~2.4.1]{Pap14}.
\begin{definition} \label{D4.2}
A metric space $(X,d)$ is called a {geodesic space} if for any  $x,y\in X$, there exists a geodesic joining $x$ and $y$, i.e., there exists a curve $\gamma:[0,d(x,y)]\to X$ such that
$\gamma(0)=x$, $\gamma(d(x,y))=y$, and
\begin{gather}\label{D4.2-1}
d(\gamma(t),\gamma(t'))=|t-t'|\;\;\text{for all}\;\;t,t'\in[0,d(x,y)].	
\end{gather}
\end{definition}

\begin{remark}
Examples of geodesic spaces can be found in \cite[Example~2.4.3]{Pap14}.
Every geodesic space is a length space \cite[Proposition~2.4.2]{Pap14}, and the converse implication does not hold in general \cite[Example~2.4.4]{Pap14}.
By the Hopf–Rinow theorem \cite[Proposition~3.7]{BriHae99}, a length space is a geodesic space if it is complete and locally compact.
\end{remark}	

\begin{proposition}\label{P4.5}
A metric space $(X,d)$ is a geodesic space if and only if for any $x,y\in X$, there exists a curve $\sigma:[0,1]\to X$ such that
$\sigma(0)=x$, $\sigma(1)=y$, and
\begin{gather}\label{D4.2-10}
d(\sigma(t),\sigma(t'))=d(x,y)\cdot|t-t'|\;\;\text{for all}\;\;t,t'\in[0,1].	
\end{gather}
\end{proposition}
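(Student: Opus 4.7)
The plan is to prove both implications by a reparameterization of the parameter interval, which is the same idea underlying Proposition~\ref{P4.2}. The two cases $d(x,y)=0$ and $d(x,y)>0$ should be handled separately, since the constant $\lambda := d(x,y)$ appears in the denominator of the rescaling.

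For the forward implication, I would fix $x,y\in X$ and set $\lambda := d(x,y)$. If $\lambda=0$, then $x=y$ and the constant curve $\sigma \equiv x$ on $[0,1]$ trivially satisfies~\eqref{D4.2-10}. If $\lambda>0$, Definition~\ref{D4.2} supplies a geodesic $\gamma:[0,\lambda]\to X$ with $\gamma(0)=x$, $\gamma(\lambda)=y$, and $d(\gamma(t),\gamma(t'))=|t-t'|$ for all $t,t'\in[0,\lambda]$. Define $\sigma(s):=\gamma(\lambda s)$ for $s\in[0,1]$. Then $\sigma(0)=x$, $\sigma(1)=y$, and a direct computation
\begin{gather*}
d(\sigma(s),\sigma(s')) = d(\gamma(\lambda s),\gamma(\lambda s'))=|\lambda s-\lambda s'|=d(x,y)\cdot|s-s'|
\end{gather*}
yields~\eqref{D4.2-10}.

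For the converse, fix $x,y\in X$ and let $\sigma:[0,1]\to X$ be a curve satisfying~\eqref{D4.2-10}. Setting $\lambda := d(x,y)$, if $\lambda=0$, then $x=y$ and the constant curve on the degenerate interval $[0,0]$ serves as a geodesic in the sense of Definition~\ref{D4.2}. If $\lambda>0$, condition~\eqref{D4.2-10} says precisely that $\sigma$ is a constant speed geodesic with number $\lambda$ in the sense of Definition~\ref{D4.1-1}\eqref{D4.1-1.2}, with $a=0$ and $b=1$. Applying Proposition~\ref{P4.2} with these values, the curve
\begin{gather*}
\gamma(t):=\sigma\left(\frac{t}{\lambda}\right)\;\;\text{for all}\;\;t\in[0,\lambda]
\end{gather*}
is a geodesic, and clearly $\gamma(0)=\sigma(0)=x$ and $\gamma(\lambda)=\sigma(1)=y$. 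Hence $(X,d)$ is a geodesic space by Definition~\ref{D4.2}.

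I do not anticipate any genuine obstacle here: once the degenerate case $d(x,y)=0$ is dispatched, the statement reduces to a direct application of Proposition~\ref{P4.2} (and its trivial inverse reparameterization), which already establishes the equivalence between geodesics and constant speed geodesics. The only point meriting care is making sure the parameter intervals $[0,\lambda(b-a)]$ and $[0,1]$ match under the substitution $a=0,b=1$, so that the conclusion of Proposition~\ref{P4.2} lands on the correct domain $[0,\lambda]=[0,d(x,y)]$ required by Definition~\ref{D4.2}.
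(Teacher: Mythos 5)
Your proposal is correct and follows essentially the same route as the paper: rescale a geodesic $\gamma$ on $[0,d(x,y)]$ to $[0,1]$ for the forward direction, and for the converse recognize $\sigma$ as a constant speed geodesic and invoke Proposition~\ref{P4.2} (the paper likewise treats $x=y$ separately in the converse). The only cosmetic difference is that the paper does not split off the case $d(x,y)=0$ in the forward direction, since the formula $\sigma(t):=\gamma(t\cdot d(x,y))$ degenerates correctly there.
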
	

\begin{proof}
Suppose that $(X,d)$ is a geodesic space.
Let $x,y\in X$.
By Definition~\ref{D4.2}, there exists a curve $\gamma:[0,d(x,y)]\to X$ such that $\gamma(0)=x$, $\gamma(d(x,y))=y$, and condition \eqref{D4.2-1} is satisfied.
Let $\sigma(t):=\gamma(t\cdot d(x,y))$ for all $t\in[0,1]$.
Then $\sigma(0)=x$, $\sigma(1)=y$, and condition \eqref{D4.2-10} holds true.

We now prove the converse implication.
Let $x,y\in X$.
If $x=y$, then by Definition~\ref{D4.1-1}\eqref{D4.1-1.1}, the curve $\gamma:[0,0]\to X$ given by $\gamma(0)=x$ is a geodesic.
Suppose that $x\ne y$.
By the assumption, there exists a curve $\sigma:[0,1]\to X$ such that
$\sigma(0)=x$, $\sigma(1)=y$, and condition \eqref{D4.2-10} is satisfied.
By Definition~\ref{D4.1-1}\eqref{D4.1-1.2}, $\sigma$ is a constant speed geodesic with number $d(x,y)>0$.
By Proposition~\ref{P4.2},  the curve $\gamma:[0,d(x,y)]\to X$ defined by
\begin{gather*}
\gamma(t):=\sigma\left(\frac{t}{d(x,y)}\right)
\;\;\text{for all}\;\;t\in[0,d(x,y)]
\end{gather*}
is a geodesic.
Note that  $\gamma(0)=x$ and $\gamma(d(x,y))=y$.
By Definition~\ref{D4.2}, $(X,d)$ is a geodesic space.
\end{proof}	

\begin{remark}
The choice of the interval $[0,1]$ in Proposition~\ref{P4.5} is merely a normalization.
More precisely, one can consider  an arbitrary interval $[a,b]$ with $a<b$ and define a curve $\tilde\sigma:[a,b]\to X$ satisfying
$\tilde\sigma(a)=x$, $\tilde\sigma(b)=y$, and
\begin{gather*}
d(\tilde\sigma(t),\tilde\sigma(t'))=\frac{d(x,y)}{b-a}\cdot|t-t'|
\;\;\text{for all}\;\; t,t'\in[a,b].	
\end{gather*}	
Let $\sigma:[0,1]\to X$ be given by $\sigma(t):=\tilde\sigma(a+t(b-a))$ for all $t\in[0,1]$.
Then $\sigma(0)=x$, $\sigma(1)=y$, and condition \eqref{D4.2-10} is satisfied.
\end{remark}	

The following theorem establishes a necessary and sufficient condition for the product space to be a geodesic space.
\begin{theorem}\label{T4.9}
Let $(X_i,d_i)$ $(i=1,\ldots,n)$ be  metric spaces, $M:=X_1\times\cdots\times X_n$, and $\psi\in\pmb{\Psi}_n$. 
Then  $(X_i,d_i)$ $(i=1,\ldots,n)$ are geodesic spaces if and only if $(M,d_\psi)$ is a geodesic space.
\end{theorem}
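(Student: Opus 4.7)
The plan is to characterize geodesics in the product via constant-speed geodesics in each factor, so that Proposition~\ref{P4.5} reduces the problem to a clean computation with the function $\psi$.

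For the ``only if'' direction, I fix $x:=(x_1,\ldots,x_n)$, $y:=(y_1,\ldots,y_n)\in M$ and apply Proposition~\ref{P4.5} in each factor to produce curves $\sigma_i\colon[0,1]\to X_i$ with $\sigma_i(0)=x_i$, $\sigma_i(1)=y_i$ and $d_i(\sigma_i(t),\sigma_i(t'))=\al_i|t-t'|$, where $\al_i:=d_i(x_i,y_i)$. I then define the product curve $\sigma:=(\sigma_1,\ldots,\sigma_n)$ on $[0,1]$ and compute directly from \eqref{T2.2-1}: all component distances share the common factor $|t-t'|$, which pulls out of the simplex argument in $\psi$, yielding
\begin{gather*}
d_\psi(\sigma(t),\sigma(t'))=\al\cdot|t-t'|\cdot\psi\!\left(\dfrac{\al_1}{\al},\ldots,\dfrac{\al_n}{\al}\right)=d_\psi(x,y)\cdot|t-t'|,
\end{gather*}
where $\al:=\sum_{i=1}^n\al_i$. (If some $\al_i$ are zero the formula still holds, after separating the trivial case $x=y$.) Proposition~\ref{P4.5} then concludes that $(M,d_\psi)$ is geodesic.

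For the ``if'' direction, I fix an index $i$ and points $x_i,y_i\in X_i$, and embed them into $M$ using \eqref{T4.8-9} with arbitrarily chosen $x_j\in X_j$ for $j\ne i$. The computation already performed in the proof of Theorem~\ref{T3.10} gives $d_\psi(x,y)=d_i(x_i,y_i)$. Apply Proposition~\ref{P4.5} to $(M,d_\psi)$ to obtain a curve $\sigma:=(\sigma_1,\ldots,\sigma_n)\colon[0,1]\to M$ with $\sigma(0)=x$, $\sigma(1)=y$ and $d_\psi(\sigma(t),\sigma(t'))=d_i(x_i,y_i)|t-t'|$. The $i$-th coordinate curve $\sigma_i$ joins $x_i$ and $y_i$, and \eqref{R2.4-3} gives the upper bound $d_i(\sigma_i(t),\sigma_i(t'))\le d_i(x_i,y_i)|t-t'|$.

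The matching lower bound comes from the triangle inequality applied along $0\le t\le t'\le 1$:
\begin{gather*}
d_i(x_i,y_i)\le d_i(\sigma_i(0),\sigma_i(t))+d_i(\sigma_i(t),\sigma_i(t'))+d_i(\sigma_i(t'),\sigma_i(1))\le d_i(x_i,y_i)\bigl(t+(1-t')\bigr)+d_i(\sigma_i(t),\sigma_i(t')),
\end{gather*}
which forces $d_i(\sigma_i(t),\sigma_i(t'))\ge d_i(x_i,y_i)|t-t'|$. Hence $\sigma_i$ satisfies \eqref{D4.2-10} in $X_i$, and a final appeal to Proposition~\ref{P4.5} shows that $(X_i,d_i)$ is a geodesic space. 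I do not expect a serious obstacle here: the main structural observation is that constant-speed parameterizations make the simplex argument of $\psi$ time-independent, which is exactly what decouples the product construction from the internal geometry of each factor.
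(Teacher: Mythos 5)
Your proposal is correct and follows essentially the same route as the paper: both directions reduce to Proposition~\ref{P4.5}, the forward direction by assembling constant-speed component geodesics so that the simplex argument of $\psi$ becomes time-independent, and the reverse direction by embedding $X_i$ via \eqref{T4.8-9}, using \eqref{R2.4-3} for the upper bound and a triangle-inequality argument for the matching lower bound. The only cosmetic difference is that the paper first derives $d_i(x_i,\sigma_i(t))=t\,d_i(x_i,y_i)$ and then applies the reverse triangle inequality, whereas you chain the three-point triangle inequality directly; the two are equivalent.
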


\begin{proof}
By Theorem~\ref{T3.10}, $(M,d_\psi)$ is a metric space.
 Suppose that $(X_i,d_i)$ $(i=1,\ldots,n)$ are geodesic spaces.
 Let
 $x:=(x_1,\ldots,x_n),y:=(y_1,\ldots,y_n)\in M$.
 By Proposition~\ref{P4.5}, there exist curves $\sigma_i:[0,1]\to X_i$ satisfying $\sigma_i(0)=x_i$, $\sigma_i(1)=y_i$, and
 \begin{gather}\label{T4.9-5}
 d_i(\sigma_i(t),\sigma_i(t'))=d_i(x_i,y_i)\cdot|t-t'|\;\;\text{for all}\;\;t\in[0,1]
 \end{gather}	
 $(i=1,\ldots,n)$.
 Let $\sigma:=(\sigma_1,\ldots,\sigma_n)$.
 Then $\sigma(0)=x$ and $\sigma(1)=y$.
 By \eqref{T2.2-1} and \eqref{T4.9-5},
 \begin{align*}
 d_\psi(\sigma(t),\sigma(t'))
 &=d_\psi((\sigma_1(t),\ldots,\sigma_n(t)),(\sigma_1(t'),\ldots,\sigma_n(t')))\\
 &=\left(\sum_{i=1}^n d_i(x_i,y_i)\right)\cdot\psi\left(\dfrac{d_1(x_1,y_1)}{\sum_{i=1}^nd_i(x_i,y_i)},\ldots,\dfrac{d_n(x_n,y_n)}{\sum_{i=1}^nd_i(x_i,y_i)}\right)\cdot|t-t'| \\
 &=d_\psi(x,y)\cdot|t-t'|
 \end{align*}	
 for all $t,t'\in [0,1]$.
 By Proposition~\ref{P4.5},
 $(M,d_\psi)$ is a geodesic space.

Suppose that $(M,d_\psi)$ is a geodesic space.
 Let $i=1,\ldots,n$ and  $x_{i}, y_{i}\in X_i$.
 Define $x,y\in M$ by \eqref{T4.8-9} for some
 $x_j\in X_j$ ($j\ne i$).
 By Proposition~\ref{P4.5}, there exists a curve $\sigma:[0,1]\to M$ satisfying $\sigma(0)=x$, $\sigma(1)=y$, and
 \begin{gather}\label{T4.9-6}
 d_\psi(\sigma(t),\sigma(t'))=d_\psi(x,y)\cdot|t-t'|\;\;\text{for all}\;\;t,t'\in[0,1].
 \end{gather}	
Note that $\sigma:=(\sigma_1,\ldots,\sigma_n)$ for some $\sigma_i: [0,1]\to X_i$ $(i=1,\ldots,n)$. 
Fix $i=1,\ldots,n$.
Then $\sigma_i(0)=x_i$, $\sigma_i(1)=y_i$, and
 \begin{gather}
 d_i(\sigma_i(t),\sigma_i(t'))\overset{\eqref{R2.4-3}}{\le} d_\psi(\sigma(t),\sigma(t'))\overset{\eqref{T4.9-6}}{=}d_\psi(x,y)\cdot|t-t'|\overset{\eqref{T4.8-4}}{=}d_i(x_i,y_i)\cdot|t-t'|\label{T4.9-10}
 \end{gather}
 for all $t,t'\in [0,1]$. 
 We have
 \begin{gather}\label{T4.9-12}
d_i(x_i, \sigma_i(t))=t\cdot d_i(x_i,y_i)\;\;\text{and}\;\; d_i(\sigma_i(t), y_i)= (1-t)\cdot d_i(x_i,y_i)
 \end{gather}	
for all $t\in[0,1]$.
 Indeed, for any $t\in[0,1]$, it follows from \eqref{T4.9-10} that
 $d_i(x_i, \sigma_i(t))\le t\cdot d_i(x_i,y_i)$ and $d_i(\sigma_i(t), y_i)\le (1-t)\cdot d_i(x_i,y_i)$, and consequently,
 \begin{gather*}
 d_i(x_i,y_i)\leq d_i(x_i, \sigma_i(t))+d_i(\sigma_i(t), y_i)\leq t\cdot d_i(x_i,y_i)+(1-t)\cdot d_i(x_i,y_i)=d_i(x_i,y_i).
 \end{gather*}
Hence, condition \eqref{T4.9-12} is satisfied.
Then
\begin{gather}
d_i(\sigma_i(t),\sigma_i(t'))\geq \left|d_i(x_i,\sigma_i(t))-d_i(x_i,\sigma_i(t'))\right|\overset{\eqref{T4.9-12}}{=} d_i(x_i,y_i)\cdot|t-t'|\label{T4.9-13}
\end{gather}
for all $t,t'\in[0,1]$.
By \eqref{T4.9-10} and \eqref{T4.9-13},
\begin{gather*}
d_i(\sigma_i(t),\sigma_i(t'))=d_i(x_i,y_i)\cdot|t-t'|\;\;\text{for all}\;\;t,t'\in[0,1].
\end{gather*}	
By Proposition~\ref{P4.5}, $(X_i,d_i)$ is a geodesic space.
\end{proof}	

\begin{remark}
When $\psi:=\psi_p$ is defined by 
\eqref{ppsi} with  $p\in[1,\infty]$,  Theorem~\ref{T4.9} recaptures \cite[Proposition~5.3(2) \& Exercises~5.5(1)]{BriHae99} and improves \cite[Proposition~2.6.6]{Pap14}.	
\end{remark}	

The next statement recalls the midpoint characterization of a geodesic space; cf. \cite[Remarks~1.4(1)]{BriHae99} and \cite[Theorem~2.6.2(ii) \& (iv)]{Pap14}.
\begin{lemma}\label{L4.4}
A complete metric space $(X,d)$ is a geodesic space if and only if for any $x, y\in X$, there exists a $\bx\in X$ such that
\begin{gather}\label{midpoint}
d(x,\bx)=d(y,\bx)=\frac{1}{2}d(x,y).
\end{gather}
\end{lemma}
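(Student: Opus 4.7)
The plan is to prove the two directions separately, relying on Proposition~\ref{P4.5} as the bridge to geodesic spaces. For the forward direction, I would apply Proposition~\ref{P4.5} to obtain, for any $x,y\in X$, a curve $\sigma:[0,1]\to X$ with $\sigma(0)=x$, $\sigma(1)=y$, and $d(\sigma(t),\sigma(t'))=d(x,y)\cdot|t-t'|$. Setting $\bar x:=\sigma(1/2)$ immediately yields $d(x,\bar x)=d(\sigma(0),\sigma(1/2))=d(x,y)/2=d(\sigma(1/2),\sigma(1))=d(y,\bar x)$, which is \eqref{midpoint}.

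For the converse, the strategy is to construct a constant speed geodesic from $x$ to $y$ by iterated midpoint bisection and then invoke Proposition~\ref{P4.5}. Fix $x,y\in X$ and set $\sigma(0):=x$, $\sigma(1):=y$, and $\sigma(1/2)$ to be a midpoint of $x$ and $y$ furnished by the hypothesis. Inductively, for each $n\geq 0$ and each $k\in\{0,\ldots,2^n-1\}$, define $\sigma((2k+1)/2^{n+1})$ as a midpoint of $\sigma(k/2^n)$ and $\sigma((k+1)/2^n)$, which exists by hypothesis. This produces a map $\sigma:D\to X$ on the set $D\subset [0,1]$ of dyadic rationals.

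The key step is to establish by induction on $n$ that the $2^n+1$ points $\sigma(k/2^n)$ $(k=0,\ldots,2^n)$ satisfy $d(\sigma(k/2^n),\sigma((k+1)/2^n))=d(x,y)/2^n$ for every adjacent pair. The inductive step is immediate from the midpoint construction once the distance between the endpoints $\sigma(k/2^n)$ and $\sigma((k+1)/2^n)$ is known. The non-adjacent distances are then pinned down by a sandwich argument: for any $0\leq j<k\leq 2^n$, chaining the adjacent segments yields the upper bound $d(\sigma(j/2^n),\sigma(k/2^n))\leq (k-j)\cdot d(x,y)/2^n$, while applying the triangle inequality along the chain $x\to\sigma(j/2^n)\to\sigma(k/2^n)\to y$ and comparing against $d(x,y)=d(x,y)$ forces the matching lower bound. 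Together these give $d(\sigma(s),\sigma(t))=d(x,y)\cdot|s-t|$ for all $s,t\in D$.

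Since $\sigma$ is thus a $d(x,y)$-Lipschitz map on the dense subset $D\subset[0,1]$ and $X$ is complete, $\sigma$ extends uniquely to a continuous map $\sigma:[0,1]\to X$; continuity of the metric propagates the identity $d(\sigma(t),\sigma(t'))=d(x,y)\cdot|t-t'|$ to all $t,t'\in[0,1]$. Proposition~\ref{P4.5} then yields that $(X,d)$ is a geodesic space. The main obstacle will be the inductive bookkeeping establishing the isometry identity on $D$ — specifically, ensuring the two-sided triangle inequality estimates propagate cleanly through each bisection step so that equality, rather than mere inequality, is recovered at every level.
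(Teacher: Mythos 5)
Your proof is correct. The paper does not actually prove Lemma~\ref{L4.4} --- it is recalled from the literature with citations to Bridson--Haefliger and Papadopoulos --- and your argument (taking the midpoint of a geodesic via Proposition~\ref{P4.5} in one direction; iterated dyadic bisection, the two-sided triangle-inequality sandwich pinning down all distances on the dyadic rationals, and Lipschitz extension by completeness in the other) is exactly the standard proof found in those cited sources.
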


The proof of Theorem~\ref{T4.9} uses the definition of a geodesic space. 
When $(X_i,d_i)$ $(i=1,\ldots,n)$ are complete metric spaces, an alternative proof can be provided by employing Lemma~\ref{L4.4}.

\begin{proof}[Proof of Theorem~\ref{T4.9} under the completeness assumption]
By Remark~\ref{R2.4}\eqref{R2.4-2} and Theorem~\ref{T3.10},  $(M,d_\psi)$ is a complete metric space.
Suppose that $(X_i,d_i)$ $(i=1,\ldots,n)$ are geodesic spaces.		
Let $x:=(x_1,\ldots,x_n), y:=(y_1,\ldots,y_n)\in M$. 
By Lemma~\ref{L4.4}, there exist points $\bx_i\in X_i$ such that
\begin{gather}\label{T4.9-4}
d_i(x_i,\bx_i)=d_i(y_i,\bx_i)=\frac{1}{2}d_i(x_i,y_i)\;\;(i=1,\ldots,n).
\end{gather}	
Let $\theta_i:=d_i(x_i,\bx_i)$,
$\al_i:=d_i(x_i,y_i)$ $(i=1,\ldots,n)$, $\theta:=\sum_{i=1}^{n}\theta_i$, $\al:=\sum_{i=1}^{n}\al_i$, and $\bx:=(\bx_1,\ldots,\bx_n)$.
By \eqref{T2.2-1} and \eqref{T4.9-4},
\begin{align*}
d_\psi(x,\bx)
=\theta\cdot\psi\left(\dfrac{\theta_1}{\theta},\ldots,\dfrac{\theta_n}{\theta}\right)
=\dfrac{1}{2} \al\cdot \psi\left(\dfrac{\al_1}{\al},\ldots,\dfrac{\al_n}{\al}\right)
=\dfrac{1}{2}d_\psi(x,y).
\end{align*}	
Using the similar argument, we obtain $d_\psi(y,\bx)=\frac{1}{2}d_\psi(x,y).$
By Lemma~\ref{L4.4}, $(M,d_\psi)$ is a geodesic space.
	
Suppose that $(M,d_\psi)$ is a geodesic space.
Fix $i=1,\ldots,n$ and $x_{i}, y_{i}\in X_i$.
Let $x,y\in M$ be given by \eqref{T4.8-9} for some
$x_j\in X_j$ ($j\ne i$).
Then condition \eqref{T4.8-4} is satisfied.	
By Lemma~\ref{L4.4}, there exists a $\bx:=(\bx_1,\ldots,\bx_n)\in M$ such that
\begin{gather}\label{T4.9-1}
d_\psi(x,\bx)=d_\psi(y,\bx)=\dfrac{1}{2}d_\psi(x,y)=\dfrac{1}{2}d_i(x_i,y_i).	
\end{gather}	
Note that condition \eqref{T4.8-11} is satisfied.
From the latter and \eqref{T4.9-1},
\begin{gather}\label{T4.9-2}
d_i(x_i,\bx_i)\le \dfrac{1}{2}d_i(x_i,y_i)\;\;\text{and}\;\;d_i(y_i,\bx_i)\le \dfrac{1}{2}d_i(x_i,y_i).
\end{gather}	
	By the triangle inequality, 
	\begin{gather}\label{T4.9-3}
		d_i(x_i,y_i)\leq d_i(x_i,\bx_i)+d_i(y_i,\bx_i).
	\end{gather}	
	By \eqref{T4.9-2} and \eqref{T4.9-3}, we have $d_i(x_i,\bx_i)=d_i(y_i,\bx_i)=\frac{1}{2}d_i(x_i,y_i).$
	By Lemma~\ref{L4.4}, $(X_i,d_i)$ is a geodesic space. 
\end{proof}	

We are going to establish a formula to compute length of a curve in a product space via lengths of its component curves.  
First, we recall the notion of an \textit{affinely (reparametrized) geodesic} \cite[Definition 2.2.8]{Pap14}.
\begin{definition}
Let $(X,d)$ be a metric space. 
A curve $\sigma:[a,b]\to X$ is called an \textit{affinely  geodesic} if either $\sigma$ is constant  or 
there exists a geodesic $\sigma': [c,d]\to X$ such that $\sigma=\sigma'\circ\theta$, where $\theta: [a,b]\to[c,d]$ is uniquely defined by
\begin{gather*}
\theta(t):=\frac{(d-c)t+bc-ad}{b-a}\;\;\text{for all}\;\;t\in[a,b].
\end{gather*}
\end{definition}

The next statement provides characterizations of an affinely geodesic; cf \cite[Propositions 2.2.9 and 2.2.10]{Pap14}.
\begin{proposition}\label{L4.11}
Let $(X,d)$ be a metric space, and $\sigma:[0,1]\to X$.
\begin{enumerate}
\item\label{L4.11-1}
If $\sigma$ is an affinely geodesic, then it is a constant speed geodesic with number $\lambda:=L(\sigma)$.
\item\label{L4.11-2}
If $\sigma$ is a constant speed geodesic with number $\lambda\ge0$, then it is an affinely geodesic and $\lambda=L(\sigma)$.
\end{enumerate}
\end{proposition}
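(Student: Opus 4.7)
The plan is to prove each implication by a direct verification, handling the degenerate constant case separately. In both parts the case ``$\sigma$ is constant'' is trivial: one has $d(\sigma(t),\sigma(t'))=0=0\cdot|t-t'|$ and, by Definition~\ref{D3.1}, $L(\sigma)=0$, so the assertions hold with $\lambda=0$, and a constant curve is already affinely geodesic by definition.

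For \eqref{L4.11-1} with $\sigma$ non-constant, I would write $\sigma=\sigma'\circ\theta$ with $\sigma':[c,d]\to X$ a geodesic and $\theta:[0,1]\to[c,d]$ given by the explicit affine formula $\theta(t)=(d-c)t+c$ (the specialization of the formula in the definition to $a=0$, $b=1$). For $t,t'\in[0,1]$, combining the geodesic identity for $\sigma'$ with the affine form of $\theta$ yields
\[
d(\sigma(t),\sigma(t'))=|\theta(t)-\theta(t')|=(d-c)|t-t'|,
\]
so, by Definition~\ref{D4.1-1}\eqref{D4.1-1.2}, $\sigma$ is a constant speed geodesic with number $d-c$. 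To identify this number with $L(\sigma)$ I would apply Definition~\ref{D3.1} to any partition $0=t_0<\cdots<t_n=1$: the sum telescopes to $(d-c)\sum_{i=1}^n(t_i-t_{i-1})=d-c$, independently of the partition, so the supremum in \eqref{D3.1-1} equals $d-c$, giving $L(\sigma)=\lambda$.

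For \eqref{L4.11-2} with $\lambda>0$, I would invoke Proposition~\ref{P4.2} (applied with $a=0$ and $b=1$) to produce the geodesic $\gamma:[0,\lambda]\to X$ defined by $\gamma(s):=\sigma(s/\lambda)$. Setting $\sigma':=\gamma$, $c:=0$, $d:=\lambda$, and $\theta(t):=\lambda t=(d-c)t+c$ gives $\sigma=\sigma'\circ\theta$, which is precisely the presentation required by the definition of an affinely geodesic. The length identity $L(\sigma)=\lambda$ then follows by the same telescoping computation as in \eqref{L4.11-1}. Since each step is a direct verification from the definitions, no real obstacle arises; the only mild care needed is the constant/non-constant dichotomy in \eqref{L4.11-2}, which is forced because Proposition~\ref{P4.2} assumes $\lambda>0$, and the alignment of the affine reparametrization $\theta$ from the definition with the one produced by Proposition~\ref{P4.2}.
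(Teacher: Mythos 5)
Your proof is correct and complete: the constant/non-constant dichotomy is handled properly, the specialization $\theta(t)=(d-c)t+c$ of the affine reparametrization to $[a,b]=[0,1]$ is right, the telescoping computation correctly identifies $L(\sigma)$ with the speed, and the use of Proposition~\ref{P4.2} in part \eqref{L4.11-2} matches the required presentation $\sigma=\sigma'\circ\theta$ exactly. The paper itself gives no proof of this proposition --- it only cites \cite[Propositions~2.2.9 and~2.2.10]{Pap14} --- so your self-contained verification supplies an argument the paper omits, and it is the natural one.
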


The following statement gives a formula for computing length of curves in product spaces.
\begin{theorem}\label{T4.12}
Let $(X_i,d_i)$ be geodesic spaces, $x_i,y_i\in X_i$,
$\sigma_i:[0,1]\to X_i$ be an affinely geodesic with
$\sigma_i(0)=x_i$ and $\sigma_i(1)=y_i$ $(i=1,\ldots,n)$,
$x:=(x_1,\ldots,x_n)$, $y:=(y_1,\ldots,y_n)$,
$M:=X_1\times\cdots\times X_n$, $\psi\in\pmb{\Psi}_n$, and $\sigma:=(\sigma_1,\ldots,\sigma_n)$.
Then $\sigma$ is an affinely geodesic joining $x$ and $y$ in the geodesic space $(M,d_\psi)$, and
\begin{gather}\label{T4.12-1}
L(\sigma)=\left(\sum_{i=1}^{n}L(\sigma_i) \right)\cdot\psi\left(\frac{L(\sigma_1)}{\sum_{i=1}^{n}L(\sigma_i)},\ldots, \frac{L(\sigma_n)}{\sum_{i=1}^{n}L(\sigma_i)}\right).
\end{gather}
\end{theorem}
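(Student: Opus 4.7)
The plan is to exploit the fact that each affinely geodesic $\sigma_i$ is a constant speed geodesic with number equal to its length, then compute $d_\psi(\sigma(t),\sigma(t'))$ directly and recognize it as $\lambda\cdot|t-t'|$ for the right $\lambda$, which by Proposition~\ref{L4.11} makes $\sigma$ an affinely geodesic of length $\lambda$.

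More precisely, by Theorem~\ref{T4.9}, $(M,d_\psi)$ is a geodesic space. By Proposition~\ref{L4.11}\eqref{L4.11-1}, each component $\sigma_i$ is a constant speed geodesic with number $\lambda_i:=L(\sigma_i)$, so
\begin{gather*}
d_i(\sigma_i(t),\sigma_i(t'))=L(\sigma_i)\cdot|t-t'|\quad\text{for all}\;\;t,t'\in[0,1],\;\;i=1,\ldots,n.
\end{gather*}
Assume first that $\Lambda:=\sum_{i=1}^{n}L(\sigma_i)>0$. Fix $t,t'\in[0,1]$ with $t\neq t'$. Plugging the above into \eqref{T2.2-1} and factoring $|t-t'|$ out of the sum $\sum_i d_i(\sigma_i(t),\sigma_i(t'))=\Lambda|t-t'|$ in the numerator and in each ratio inside $\psi$, the common factor cancels inside $\psi$, leaving
\begin{gather*}
d_\psi(\sigma(t),\sigma(t'))=\Lambda\cdot\psi\!\left(\frac{L(\sigma_1)}{\Lambda},\ldots,\frac{L(\sigma_n)}{\Lambda}\right)\cdot|t-t'|=:\lambda\cdot|t-t'|.
\end{gather*}
The same equality trivially holds when $t=t'$.

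Hence by Definition~\ref{D4.1-1}\eqref{D4.1-1.2}, $\sigma$ is a constant speed geodesic on $[0,1]$ with number $\lambda$, and the endpoint conditions $\sigma(0)=x$, $\sigma(1)=y$ follow coordinatewise from those of the $\sigma_i$. Proposition~\ref{L4.11}\eqref{L4.11-2} then yields that $\sigma$ is an affinely geodesic and $L(\sigma)=\lambda$, which is precisely \eqref{T4.12-1}. In the degenerate case $\Lambda=0$, every $\sigma_i$ is constant, so $\sigma$ is constant and therefore an affinely geodesic by definition, with $L(\sigma)=0$; both sides of \eqref{T4.12-1} vanish under the convention $0\cdot\psi(\cdot)=0$ inherited from \eqref{T2.2-1}.

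The only subtle step is the homogeneity cancellation: one must note that the arguments of $\psi$ in \eqref{T2.2-1} depend only on the ratios of the component distances, so the uniform factor $|t-t'|$ disappears from the argument of $\psi$ and survives only as an outer multiplier, which is what produces the constant speed property. Given this observation, the remainder of the argument is a direct appeal to Proposition~\ref{L4.11} and Theorem~\ref{T4.9}, with no further estimation needed.
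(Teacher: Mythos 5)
Your proposal is correct and follows essentially the same route as the paper's proof: reduce each $\sigma_i$ to a constant speed geodesic via Proposition~\ref{L4.11}\eqref{L4.11-1}, compute $d_\psi(\sigma(t),\sigma(t'))=\lambda\cdot|t-t'|$ from \eqref{T2.2-1}, and conclude with Proposition~\ref{L4.11}\eqref{L4.11-2}. Your explicit treatment of the degenerate case $\sum_i L(\sigma_i)=0$ is a small but welcome addition that the paper leaves implicit.
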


\begin{proof}
By	Theorem~\ref{T4.9}, $(M,d_\psi)$ is a geodesic space.
It is clear that $\sigma$ joins $x$ and $y$.
Let $t,t'\in[0,1]$.
By Definition~\ref{D4.1-1}\eqref{D4.1-1.2} and Proposition~\ref{L4.11}\eqref{L4.11-1},
\begin{gather}\label{T4.12-2}
d_i(\sigma_i(t),\sigma_i(t'))=L(\sigma_{i})\cdot|t-t'|\;\;(i=1,\ldots,n).
\end{gather}	
Let $\lambda$ be the number in the right-hand side of \eqref{T4.12-1}.
By \eqref{T2.2-1} and \eqref{T4.12-2},
\begin{align*}
d_\psi(\sigma(t),\sigma(t'))
=d_\psi((\sigma_1(t),\ldots,\sigma_n(t)),(\sigma_1(t'),\ldots,\sigma_n(t')))
=\lambda\cdot|t-t'|.
\end{align*}
By Definition~\ref{D4.1-1}\eqref{D4.1-1.2}, the curve $\sigma$ is a constant speed geodesic with number $\lambda$. 
In view of Proposition~\ref{L4.11}\eqref{L4.11-2}, the curve $\sigma$ is  an affinely geodesic  and $\lambda=L(\sigma)$.
This completes the proof.
\end{proof}

\begin{remark}
When $\psi:=\psi_p$	is given by \eqref{ppsi} with $p\in[1,\infty]$, Theorem~\ref{T4.12} recaptures
 \cite[Proposition~2.6.7]{Pap14}.
\end{remark}	

The following example illustrates Theorem~\ref{T4.12}.
\begin{example}
Let $X_i:=\R$ be equipped with the metric
$d_i(x,y):=\left|x^{2i-1}-y^{2i-1}\right|$ for all $x,y\in \R$ $(i=1,\ldots,n)$.
For each $i=1,\ldots,n$, let $u,v\in X_i$ and
define a curve $\xi_i:[0,1]\to\R$ by
\begin{gather*}
\xi_i(t):=\left((1-t)\cdot u^{2i-1}+t\cdot  v^{2i-1}\right)^{\frac{1}{2i-1}}
\;\;\text{for all}\;\;t\in[0,1].
\end{gather*}	
Then $d_i(\xi_i(t),\xi_i(t'))=d_i(u,v)\cdot |t-t'|$
for all $t,t'\in[0,1]$.
By Proposition~\ref{P4.5}, $(X_i,d_i)$ $(i=1,\ldots,n)$ are geodesic spaces.
Let  $M:=X_1\times\cdots\times X_n$, and $\psi\in\pmb{\Psi}_n$.
By Theorem~\ref{T4.9}, $(M,d_\psi)$ is a geodesic space.

For each $i=1,\ldots,n$, consider a curve $\sigma_i:[0,1]\to \R$ given by $\sigma_{i}(t):=(i\cdot t)^{\frac{1}{2i-1}}$ for all $t\in[0,1]$. 
It is clear that $\sigma_i$ joins $0$ and $i^{\frac{1}{2i-1}}$ $(i=1,\ldots,n)$.
Let $\sigma:=(\sigma_1,\ldots,\sigma_n)$.
Then $\sigma$ joins $(0,\ldots,0)$ and $(1,\ldots,i^{\frac{1}{2i-1}},\ldots,n^{\frac{1}{2n-1}})$.
We have $d_i(\sigma_i(t),\sigma_i(t'))=i\cdot|t-t'|$ for all $t,t'\in[0,1]$.
By Definition~\ref{D4.1-1}\eqref{D4.1-1.2},  $\sigma_i$ is a constant speed geodesic with number $i$.
By  Proposition~\ref{L4.11}\eqref{L4.11-2}, $\sigma_i$ is an affinely geodesic and $L(\sigma_i)=i$. 
By Theorem~\ref{T4.12}, $\sigma$ is an affinely geodesic, and\sloppy
\begin{align*}
L(\sigma)
&=\left(\sum_{i=1}^{n}L(\sigma_i) \right)\cdot\psi\left(\frac{L(\sigma_1)}{\sum_{i=1}^{n}L(\sigma_i)},\ldots, \frac{L(\sigma_n)}{\sum_{i=1}^{n}L(\sigma_i)}\right)\\
&=(1+\cdots+n)\cdot\psi\left(\frac{1}{1+\cdots+n},\ldots,\frac{n}{1+\cdots+n}\right)\\
&=\frac{n(n+1)}{2}\cdot\psi\left(\frac{2}{n(n+1)},\frac{4}{n(n+1)},\ldots,\frac{2n}{n(n+1)}\right).
\end{align*}
If $\psi:=\psi_p$ is given by \eqref{ppsi} with $p\in[1,\infty]$, then
\begin{gather*}
L(\sigma)=
\begin{cases}	
(1^p+2^p+\cdots+n^p)^{\frac{1}{p}}& \text{\rm if } p\in[1,\infty),\\
n & \text{\rm if } p=\infty.
\end{cases}
\end{gather*}
\end{example}

\section*{Author contributions}
The authors contributed equally to this work.

\section*{Funding}
No funding was received for conducting this study.

\section*{Data availability statement}
No datasets were generated or analyzed during the current study.

\section*{Declarations}

\subsection*{Conflict of interest}
The authors declare no conflict of interest.

%\section*{Funding}
%The research has been supported by the Postdoctoral Scholarship Programme of the Vingroup Innovation Foundation (VinIF) code VINIF.2023.STS.54.
%The research is supported by Vietnam National Program for the Development of Mathematics 2021-2030 under grant number B2023-CTT-09.

%\section*{Acknowledgement}The author wishes to thank Professor Alexander Kruger for comments and suggestions.

%\addcontentsline{toc}{section}{References}
%\bibliography{BUCH-kr,Kruger,KR-tmp}
%\bibliographystyle{spmpsci}
%\bibliographystyle{amsplain}
%\bibliographystyle{tfnlm}

\end{document}